\newtheorem{theorem}{Theorem}
\theoremstyle{plain}
\newtheorem{definition}[theorem]{Definition}
\newtheorem{example}[theorem]{Example}
\newtheorem{proposition}[theorem]{Proposition}
\newcommand{\bR}{\mathbb{R}}
\newcommand{\bRd}{\mathbb{R}^d}
\newcommand{\cC}{\mathcal{C}}
\newcommand{\cS}{\mathcal{S}}
\newcommand{\bE}{\mathbb{E}}
\newcommand{\cJ}{\mathcal{J}}
\newcommand\bel[1]{\begin{equation}\label{#1}}
\newcommand\ee{\end{equation}}
\numberwithin{equation}{section}
\numberwithin{theorem}{section}
\def\ba{\boldsymbol{\alpha}}
\def\bbt{\boldsymbol{\beta}}
\def\Hep{\xi}
\def\Hepo{\mathrm{H}}
\newcommand\opr[1]{\mathrm{#1}}
\newcommand\bld[1]{\boldsymbol{#1}}
\newcommand\zm{\boldsymbol{(0)}}
\newcommand\km{\boldsymbol{\epsilon(k)}}
\newcommand\str[1]{\widetilde{#1}}
\newcommand\mfk[1]{\mathfrak{#1}}
\newcommand\dual[2]{\langle {#1},{#2} \rangle}
\def\Holder{H\"{o}lder }
\begin{document}

\today

\title[Wick Product and Burgers Equation]{Wick Product in The Stochastic Burgers Equation: A Curse or a Cure?}
\author{S. Kaligotla}
\curraddr[S. Kaligotla]{Department of Mathematics, USC\\
Los Angeles, CA 90089 USA\\
tel. (+1) 213 821 1480; fax: (+1) 213 740 2424}
\email[S. Kaligotla]{kaligotl@usc.edu}
\author{S. V. Lototsky}
\curraddr[S. V. Lototsky]{Department of Mathematics, USC\\
Los Angeles, CA 90089 USA\\
tel. (+1) 213 740 2389; fax: (+1) 213 740 2424}
\email[S. V. Lototsky]{lototsky@usc.edu}
\urladdr{http://www-rcf.usc.edu/$\sim$lototsky}

\subjclass[2000]{60H15}
\keywords{Catalan numbers, Hida-Kondratiev spaces, Polynomial chaos}

\begin{abstract}
It has been known for a while that a nonlinear equation driven by singular
noise must be interpreted in the re-normalized,  or Wick,
 form. For the stochastic Burgers equation,
 Wick nonlinearity forces the solution to be a generalized process  no matter how regular the random perturbation is, whence the curse.  On the other hand, certain multiplicative random perturbations of the deterministic Burgers equation can only be interpreted in the Wick form, whence the cure. The analysis is based on
 the study of the coefficients of the chaos expansion of the solution at different stochastic scales.
\end{abstract}

\maketitle

\section{Introduction}

Burgers equation,
\begin{equation}
\label{BE-00}
u_t+uu_x=u_{xx},\ t>0,\ x\in \bR,
\end{equation}
first suggested as a simplified model of turbulence
  (Bateman \cite{Burgers-orig0}, Burgers \cite{Burgers-orig, Burgers-orig1})
    is now used to study problems such as  traffic flows
     (Chowdhury et al. \cite{Chowdhury}) and  mass distribution of the large scale structure of the universe (Molchanov et al. \cite{Molchanov}).
 The equation also appears in the study of
 interacting particle systems (Sznitman \cite{Sznitman}).

 Considering random initial condition and/or driving force in  equation \eqref{BE-00} is one way to model and investigate turbulence. The idea
 goes back to Burgers himself and the result is known as the Burgers turbulence.
 A popular option for the driving force is additive space-time white noise $\dot{W}(t,x)$.
 Mathematical theory of the resulting equation,
 \begin{equation}
 \label{BE-S1}
 u_t+uu_x=u_{xx}+\dot{W}(t,x)
 \end{equation}
 has been developed (Bertini et al. \cite{Bertini}). A more general version
 of \eqref{BE-S1} with multiplicative noise,
 \begin{equation}
 \label{BE-S11}
 u_t+uu_x=u_{xx}+f(t,x)+\big(g(u(t,x)\big)_x+h(u(t,x))\dot{W}(t,x),
 \end{equation}
 has also been studied (Gy{\"o}ngy  and Nualart \cite{Gyongy-Burg}).
 It turns out that \eqref{BE-S1} cannot be generalized much further
 while staying within the same mathematical framework, when the
 solution of the equation is a square-integrable random variable with sufficiently
 regular sample paths. In particular, equation
 \begin{equation}
 \label{BE-S2}
 u_t+uu_x=u_{xx}+u_x\dot{W}(t,x)
 \end{equation}
 makes no sense, because the solution, if existed, cannot be regular enough
 to define the point-wise multiplications $uu_x$ and $u_x\dot{W}$.

 Using the Wick product $\diamond$ instead of the usual point-wise multiplication
 makes it possible to study \eqref{BE-S2} and similar equations in the
 framework of white noise theory. The operation $\diamond$ was first
 introduced in quantum field theory (Wick \cite{Wick-orig}). In stochastic
  analysis, the operation is essentially a convolution (Hida and Ikeda \cite{Hida-WickPr-Orig}), and is closely connected with the It\^{o} and
  Skorokhod integrals (Holden et al. \cite[Chapter 2]{HOUZ}). The Wick
  version of \eqref{BE-S2},
  \begin{equation}
  \label{SE-W1}
  u_t+u\diamond u_x=u_{xx}+u_x\diamond \dot{W}(t,x),\ x\in \bR,
 \end{equation}
  has been studied (Benth et al. \cite[Section 5.2, Example 2]{Pot_Nl}), and is known to have a solution in the space of generalized random processes.
  The tools of
  white noise analysis and availability of generalized random processes
  make it possible to consider even more singular noise than $\dot{W}$,
  both additive (Holden et al. \cite{Holden-BurWick-add1, Holden-BurWick-add2})
  and multiplicative (Benth et al. \cite{Pot_Nl}).

  Our objective in this paper is to study the equation
\begin{equation}
\label{general}
\begin{split}
 u_t(t,x)&+u(t,x)\diamond u_x(t,x)=u_{xx}(t,x)+f(t,x)\\
 &+\sum_{k\geq 1} \left(
 a_k(t,x)u_{xx}+b_k(t,x)u_x+c_k(t,x)u+g_k(t,x)\right)\diamond \xi_k,
\end{split}
\end{equation}
 $ \ 0<t\leq T,\ u(0,x)=\varphi(x),$ where  $\{\xi_k,\ k\geq 1\}$ are independent identically distributed (iid)
standard normal random variables, $T<\infty$ is non-random, the coefficients
$a_k,b_k,c_k$ and the free terms $f,g_k$ are non-random,
 and $x\in G$, with
 \begin{itemize}
 \item $G=\bR$ (whole line)
or
\item $G= S^1$ (the circle, which corresponds to periodic boundary conditions).
\end{itemize}
Equation \eqref{general}
includes \eqref{SE-W1} as a particular case because the space-time
white noise $\dot{W}(t,x)$ can be written as
\begin{equation}
\label{STWN}
\dot{W}(t,x)=\sum_{k\geq 1} h_k(t,x)\xi_k,
\end{equation}
where $\{h_k,\ k\geq 1\}$ is an orthonormal basis in
$L_2((0,T)\times\bR)$
(see, for example, Holden et al. \cite[Definition 2.3.9]{HOUZ}).
From the physical point of view,  \eqref{general} is a natural random perturbation
of the original equation \eqref{BE-00}.
Indeed, one possible interpretation
of \eqref{BE-00} is the motion of a one-dimensional fluid, and then
the basic laws of fluid dynamics suggest a more general version of
\eqref{BE-00}:
\begin{equation}
\label{BE-general}
u_t+uu_x=\frac{1}{\rho}(\mu u_x)_x+\frac{1}{\rho}F(t,x),
\end{equation}
where $\rho$ is the density of the fluid, $\mu$ is the (dynamic)
viscosity, and $F$ is the external force (see, for example,
Gallavotti \cite[Section 1.2.2]{Gallavotti}). Thus, \eqref{BE-00} is
\eqref{BE-general} with constant $\mu$ and $\rho$.
If $\mu$ is not known, then (assuming $\rho$ is still constant)
a possible approach  is to consider
\begin{equation}
\label{mu}
\mu(t,x)=\mu_0+\varepsilon \dot{W}(t,x), \ \mu_0=const;
\end{equation}
given the time and space scales of the model, one can  choose $\varepsilon>0$  small enough to have the right-hand side of
\eqref{mu}  positive with probability arbitrarily close to one.
Substituting \eqref{mu} into \eqref{BE-general} and
using \eqref{STWN} (and interpreting all multiplications in the Wick sense)
leads to a particular case of \eqref{general} with
$a_k=\mu_0+h_k$, $b_k=\partial h_k/\partial x$, $f=c_k=g_k=0$. Note that we must interpret multiplications in the Wick
sense: no matter how small the $\varepsilon$ is in \eqref{mu}, the
equation $u_t=(1+\varepsilon \dot{W}(t,x))u_{xx}$ is
ill-posed path-wise.
 An extra benefit of this interpretation is preservation of the mean 
  dynamic, something
we would never get with the usual Burgers equation. Indeed,
a remarkable property of the Wick product is that
 the (generalized) expectation of the product is the product of
expectations:
$\bE(u\diamond v)=(\bE u)(\bE v)$. As a result,
the (generalized) expected value $U=\bE u$ of the solution of \eqref{general}
satisfies the usual Burgers equation
$$
U_t+UU_x=U_{xx}+f,\ U|_{t=0}=\bE \varphi.
$$

The main results of the paper are as follows:
\begin{enumerate}
\item If the functions $a_k, b_k, c_k,f,g_k$ are non-random, bounded and
 measurable and the initial
    condition $\varphi$ is a generalized random field on $G$, then (under some addition technical conditions on $f$ and $\varphi$)
    there exists a unique generalized process solution of \eqref{general};
\item  The Wick product $u\diamond u_x$ forces the  solution of \eqref{general} to be a generalized process even when the
    random perturbation is very benign (such as $a_k=b_k=c_k=g_k=0$ for all
    $k\geq 1$, and $\varphi(x)=\xi_1\,h(x)$ for a smooth compactly supported $h$).
\end{enumerate}
Our approach  to proving existence and uniqueness of solution
is to derive the chaos expansion of the solution of  \eqref{general}
and to establish explicit bounds on the coefficient of the expansion.
The proof has a combinatorial component due to the appearance of the
Catalan numbers. Mikulevicius and Rozovskii \cite{MR-WNSE}
use the same approach to study the Wick-stochastic version of the
Navier-Stokes equations.

The overall conclusion, which also explains the title of the paper, is that,
 while solutions of any stochastic Burgers equation in the Wick form
are generalized random processes (even when the use of the
point-wise multiplication leads to a classical solution --- whence the curse),
certain random perturbations  can only be considered for the
Wick form of the equation (because point-wise multiplication cannot be
defined --- whence the cure). In our opinion, the cures (the ability
to consider more general stochastic perturbations, preservation  of the
mean dynamics, and an easy access to the chaos coefficients of the solution)
outweigh the curses (necessity to work with generalized random elements and
questions about  physical interpretation of the model). \\

Although there are some similarities between our work and that of
Benth et al. \cite{Pot_Nl}, there are also several important differences:
\begin{enumerate}
\item The random perturbations in \eqref{general} and in
\cite{Pot_Nl} are different and there is no obvious way to reduce one
to the other;
\item The solution of \eqref{general} is global in time (can be constructed
for all $T>0$);
\item The solution of \eqref{general} is constructed on an arbitrary
probability space, not just on the white noise space;
\item Our analysis of the chaos expansion of the solution leads to
more detailed information about the solution space and can be used
to compute the solution numerically.
\end{enumerate}
In Section \ref{sec1} we outline the main constructions in the
theory of generalized processes. In Section \ref{sec2} we define the
chaos solution for equation \eqref{general} and state the main
result about existence and uniqueness of solution (Theorem \ref{th:mainRS}).
 We also show (Theorem \ref{th:non-exist})
that the Wick product indeed forces the solution to be a generalized
process. In Section \ref{sec3} we derive the propagator for
equation \eqref{general} (the system of deterministic partial
differential equations describing the propagation of random perturbation
in the equation through different stochastic scales) and
outline the proof of Theorem \ref{th:mainRS}. We also discuss
briefly the general technical issues related to the study of \eqref{general}
 and similar equations.  The details of the
proof of Theorem \ref{th:mainRS} are in Section \ref{sec4}. \\

Throughout the paper, we fix a probability space $\mathbb{F}=(\Omega, \mathcal{F},\mathbb{P})$ and a countable collection of independent and
identically distributed (iid) standard Gaussian random variables
$\bld{\xi}=(\xi_1, \xi_2,\ldots)$ on $\mathbb{F}$. We also assume that
$\mathcal{F}$ is generated by $\bld{\xi}$. By $\bR$ and $\mathbb{C}$
we denote the sets of real and complex numbers, respectively;
$\cC$ and $\cC^1$ denote the spaces of { bounded continuous} and
bounded continuously differentiable functions (with the derivative also
bounded).

\section{Gaussian polynomial chaos and the Wick product}
\label{sec1}

We start with a review of some constructions from the white noise
theory.

\begin{definition}
\label{def:GCS}
 A generalized Gaussian chaos space is collection of the
following four objects $(\mathbb{F},\bld{\xi},H,\opr{Q})$:
\begin{itemize}
\item A probability space $\mathbb{F}=(\Omega, \mathcal{F},\mathbb{P})$;
 \item a collection of iid standard Gaussian random variables
$\bld{\xi}=(\xi_1, \xi_2,\ldots)$ such that
$\mathcal{F}$ is generated by $\bld{\xi}$;
\item A separable Hilbert space $H$;
\item An unbounded, self-adjoint positive-definite operator $\opr{Q}$ on $H$
such that $\opr{Q}$ has a pure point spectrum:
$$
\opr{Q}\mfk{h}_k=q_k\mfk{h}_k,\ k\geq 1,
$$
 the eigenfunctions $\mfk{h}_k$ of
$\opr{Q}$ form an orthonormal basis in $H$, and the eigenvalues
$q_k$ of $\opr{Q}$ satisfy
\begin{equation}
\label{eq:HSh}
\sum_{k\geq 1} \frac{1}{q_k^{\gamma}}<\infty
\end{equation}
for some $\gamma>0$.
\end{itemize}
\end{definition}
Condition \eqref{eq:HSh} ensures that the projective limit of the
domains of $\opr{Q}^n$, $n\geq 1$, is a nuclear space.\footnote{In the
white noise approach, the dual of that nuclear space with the Borel sigma-algebra and a Gaussian measure is the probability space. We are able to use an arbitrary
probability space without any topological structure. For us, the numbers $q_k$ are just weights to ensure convergence of certain infinite sums.}
To simplify some of the future computations, we will assume that
\begin{equation}
\label{eq:eig}
1<q_1\leq q_2\leq \ldots \ \ {\rm and}\ \
\sum_k \frac{1}{q_k} < 1.
\end{equation}
There is no loss of generality involved, as we can always replace the
original operator $\opr{Q}$ with $c\opr{Q}^{\gamma}$, with $c$ sufficiently large. By $\mfk{q}$ we denote the sequence $(q_1,q_2,\ldots)$ of the
eigenvalues of $\opr{Q}$.

\begin{example} {\rm  (1) The traditional white noise constructions
 correspond  to
$H=L_2(\bRd)$ and $\opr{Q}=-\bld{\Delta}+|x|^2+1$. If $d=1$, then
$q_k=2k$.
(2) Stochastic partial differential equations in $\bRd$ correspond
 $H=L_2((0,T)\times \bRd)$,
$\opr{Q}=-{\partial^2}/\partial t^2-\bld{\Delta}+|x|^2+1$,
with periodic boundary conditions in time.
(3) Stochastic partial differential equations in domains or on a manifold
correspond to
$H=L_2((0,T)\times G)$, where $G$ is a smooth
bounded domain or a smooth compact manifold in $\bRd$, and
$\opr{Q}=-{\partial^2}/\partial t^2-\bld{\Delta}$, with
periodic boundary conditions in time and appropriate boundary conditions for
the  Laplace operator $\bld{\Delta}$ on $G$. }
\end{example}

Next, we review some of the
notations related to multi-indices.

A {\em multi-index} $\ba$ is
a sequence $\ba=(\alpha_1, \alpha_2, \ldots)$ of non-negative integers, such that
only finitely many of $\alpha_k$ are different from $0$. The collection of all
multi-indices is denoted by $\cJ$. By definition,
\begin{itemize}
\item  $\bld{\beta}\leq \ba$ if $\beta_k\leq \alpha_k$ for all $k$;
\item $\bld{\beta}< \ba$ if $\beta_k\leq \alpha_k$ for all $k$ and
$\beta_k< \alpha_k$ for at least one $k$.
\end{itemize}
If $\bbt\leq \ba$, then $\ba-\bbt$ is the multi-index $(\alpha_k-\beta_k,\ k\geq 1)$.
For $\ba\in \cJ$ define
$$
|\ba|=\sum_k \alpha_k,\ \ba!=\prod_{k} \alpha_k!.
$$
Special multi-indices and the corresponding notations are
(i) $\zm$, the multi-index with all zeros: $|\zm|=0$; (ii) $\km$, the multi-index
 with $1$ at position $k$ and zeroes elsewhere: $|\km|=1$.

For a sequence $\bld{z}=(z_1,z_2,\ldots)$ of complex numbers and
$\ba\in \cJ$ we write
$$
 \bld{z}^{\ba}=\prod_{k}z_k^{\alpha_k},\ \bld{z}^{r\ba}=\big(\bld{z}^{\ba}\big)^r,\ r\in \bR.
$$

Here are some useful technical results.

\begin{proposition}
\label{prop:aux} Let $\mfk{q}=(q_1,q_2,\ldots)$ be the sequence of
eigenvalues of the operator $\opr{Q}$. Under the assumptions \eqref{eq:eig},
\begin{align}
\label{exp-sum}
&\sum_{\ba\in \cJ} \frac{\mfk{q}^{-\ba}}{\ba!} = \exp\left( \sum_{k\geq 1}
\frac{1}{q_k}\right),
\\
\label{usual-sum}
&\sum_{\ba\in \cJ} \mfk{q}^{-\ba}=\prod_{k\geq 1} \frac{q_k}{q_k-1},
\\
\label{factorial-ineq}
& |\ba|!\leq \mfk{q}^{\ba} \ba!.
\end{align}
\end{proposition}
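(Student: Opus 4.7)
The plan is to treat the three identities in turn; all three follow from the basic factorization of sums over multi-indices into products over coordinates, combined with the absolute-convergence estimates guaranteed by \eqref{eq:eig}.

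For \eqref{exp-sum} and \eqref{usual-sum}, the main observation is that both $\mfk{q}^{-\ba}$ and $\ba!$ are multiplicative in the coordinates of $\ba$, and only finitely many $\alpha_k$ are nonzero. Under the assumption $\sum_k 1/q_k<1$, the iterated sums converge absolutely and one may interchange the order of summation to write
\[
\sum_{\ba\in \cJ} \frac{\mfk{q}^{-\ba}}{\ba!}
=\prod_{k\geq 1}\Bigl(\sum_{n\geq 0} \frac{q_k^{-n}}{n!}\Bigr)
=\prod_{k\geq 1} e^{1/q_k}=\exp\Bigl(\sum_{k\geq 1}\frac{1}{q_k}\Bigr),
\]
and similarly
\[
\sum_{\ba\in \cJ}\mfk{q}^{-\ba}
=\prod_{k\geq 1}\sum_{n\geq 0}q_k^{-n}
=\prod_{k\geq 1}\frac{q_k}{q_k-1}.
\]
The only thing worth spelling out carefully is why the rearrangement is legitimate: one should truncate to the first $N$ coordinates, apply the finite-product identity there, and pass to the limit using the dominated-convergence-type bound provided by $\sum 1/q_k<1$.

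For the inequality \eqref{factorial-ineq}, I would use the multinomial identity
\[
\Bigl(\sum_{k\geq 1} y_k\Bigr)^n=\sum_{|\ba|=n}\frac{n!}{\ba!}\prod_k y_k^{\alpha_k},
\]
valid for nonnegative $y_k$ with $\sum_k y_k<\infty$ (again a truncation-and-limit argument). Setting $y_k=1/q_k$ gives
\[
\sum_{|\ba|=n}\frac{|\ba|!}{\ba!}\,\mfk{q}^{-\ba}=\Bigl(\sum_{k\geq 1}\frac{1}{q_k}\Bigr)^n\leq 1,
\]
where the last inequality uses $\sum_k 1/q_k<1$ from \eqref{eq:eig}. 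Since every term in the left-hand sum is nonnegative, each individual term is bounded by $1$, which gives $|\ba|!\leq \mfk{q}^{\ba}\ba!$ for each $\ba$ with $|\ba|=n$, and hence for every $\ba\in\cJ$.

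The main subtle point, and really the only one, is justifying the rearrangement of infinite sums/products in the passage from finitely many to infinitely many coordinates; this is where the normalization $\sum_k 1/q_k<1$ in \eqref{eq:eig} does all the work. Once that is in place, both infinite-product identities and the multinomial estimate are entirely elementary.
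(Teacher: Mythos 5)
Your proof is correct and follows essentially the same route as the paper: factorizing the multi-index sums into coordinate-wise products for \eqref{exp-sum} and \eqref{usual-sum}, and bounding a single term of the multinomial expansion of $\bigl(\sum_k 1/q_k\bigr)^n\leq 1$ for \eqref{factorial-ineq}. The only difference is that you spell out the truncation-and-limit justification for the rearrangements, which the paper leaves implicit.
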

\begin{proof}
Define $p_k=1/q_k$.
To establish \eqref{exp-sum} and \eqref{usual-sum}, note that, since
$\lim_{k\to \infty} p_k=0$, we have
$$
\exp\big(\sum_k p_k\big)=\prod_ke^{p_k}=\sum_{\ba\in \cJ} \prod_k \frac{p_k^{\alpha_k}}{\alpha_k!},\ \ \
\prod_{k\geq 1} \frac{1}{1-p_k}=\sum_{\ba\in \cJ}\prod_k p_k^{\alpha_k}.
$$
To establish \eqref{factorial-ineq}, let $n=|\ba|$ and
 use the multinomial formula and \eqref{eq:eig} to find
$$
1\geq \left(\sum_k \frac{1}{q_k}\right)^n = \sum_{\ba\in\cJ;|\ba|=n}
\frac{n!}{\ba!\,\mfk{q}^{-\ba}}.
$$
{\em This concludes the proof of Proposition \ref{prop:aux}.}
\end{proof}

 By a theorem of Cameron and Martin \cite{CM}, every square-integrable function $F(\bld{\xi})$ with values in a (complex) Hilbert space $V$ can be written as
\begin{equation}
 \label{Expansion1}
 F(\bld{\xi})=\sum_{\ba \in \cJ} F_{\ba} \Hep_{\ba},
\end{equation}
where
$$
\xi_{\ba}=\frac{1}{\sqrt{\boldsymbol{\alpha!}}}\prod_k \Hepo_{\alpha_k}(\xi_k),
$$
$$
 \Hepo_n(x)=(-1)^n e^{x^2/2}\frac{d^n}{dx^n} e^{-x^2/2}
$$
and
$$
 F_{\ba}=\bE\big(F(\bld{\xi})\Hep_{\ba}\big).
$$
Then
$$
\bE \|F(\bld{\xi})\|_V^2=\sum_{\ba \in \cJ}\|F_{\ba}\|_V^2.
$$
We denote by $\mathbb{L}(\bld{\xi};V)$ the collection of all
square-integrable $V$-valued functions $F(\bld{\xi})$.

 Next, we  construct spaces of stochastic test functions  and generalized random 
 elements. 
\begin{definition}
\label{def-sp}
 For $\rho\in[0,1]$ and $\ell\geq0$,
 \begin{itemize}
  \item the space $(\mathcal{S})_{\rho,\ell}(V)$ is the collection of $\Phi\in\mathbb{L}_2(\bld{\xi};V)$ such that
   \begin{eqnarray*}
    \|\Phi\|^2_{\rho,\ell;V}
    =\sum_{\boldsymbol{\alpha}\in\mathcal{J}}(\boldsymbol{\alpha}!)^{\rho}
    \mfk{q}^{\ell\boldsymbol{\alpha}}
    \|\Phi_{\boldsymbol{\alpha}}\|_V^2<\infty;
   \end{eqnarray*}
  \item the space $(\mathcal{S})_{-\rho,-\ell}(V)$ is the closure of $\mathbb{L}_2(\bld{\xi};V)$ with respect to the norm
   \begin{eqnarray*}
		\|\Phi\|^2_{-\rho,-\ell;V}
=\sum_{\boldsymbol{\alpha}\in\mathcal{J}}(\boldsymbol{\alpha}!)^{-\rho}
\mfk{q}^{-\ell\boldsymbol{\alpha}}
    \|\Phi_{\boldsymbol{\alpha}}\|_V^2;
   \end{eqnarray*}
  \item the space $(\mathcal{S})_{\rho}(V)$ is the projective limit (intersection endowed with a special topology) of the spaces $(\mathcal{S})_{\rho,\ell}(V)$, as $\ell$ varies over all non-negative integers;
  \item the space $(\mathcal{S})_{-\rho}(V)$ is the inductive limit (union endowed with a special topology) of the spaces $(\mathcal{S})_{-\rho,-\ell}(V)$, as $\ell$ varies over all non-negative integers.
 \end{itemize}
 
 A stochastic test function is an element of $(\mathcal{S})_{\rho}(\mathbb{C})$
 for some $\rho\geq 0$. 
 
 A ($V$-valued) { generalized random element} is an element of $(\cS)_{-1}(V)$.
\end{definition}
\noindent
Thus, every  $\Phi\in (\cS)_{-1}(V)$ has a {\em chaos expansion}
\begin{equation}
\label{ChaosExp}
\Phi=\sum_{\ba}\Phi_{\ba}\xi_{\ba},
\end{equation}
and the coefficients $\Phi_{\ba}$ provide information about $\Phi$
at different stochastic scales (that is, in the linear spans of $\xi_{\ba},\ |\ba|=n$).

In the white noise theory, the spaces $(\cS)_{-0}(\mathbb{C})$ and
$(\cS)_{-1}(\mathbb{C})$ are known,
respectively, as  the spaces of Hida and Kondratiev distributions (note that
indeed $(\cS)_{-0}$ is not the same as $(\cS)_{0}$).

It follows from \eqref{factorial-ineq}
that $\alpha!$ in the definitions of the spaces
can be replaced with $|\alpha|!$: it will not change $(\mathcal{S})_{\rho}$ and
$(\mathcal{S})_{-\rho}$ and will shift the index $\ell$ in the individual $(\mathcal{S})_{\pm\rho,\pm\ell}$. We will see below why the values of
$\rho$ are restricted to $[0,1]$.

For $\Psi \in (\mathcal{S})_{-\rho}(V)$ and $\eta\in(\mathcal{S})_{\rho}(\mathbb{C})$, define $\dual{\Psi}{\eta}\in V$ by
\begin{equation}
\label{dual}
\dual{\Psi}{\eta}=\sum_{\ba\in \cJ}\Psi_{\ba}\eta_{\ba}.
\end{equation}
By the Cauchy-Schwarz and the triangle inequalities,
\begin{equation}
\label{dual1}
\|\dual{\Psi}{\eta}\|_V\leq \left(\sum_{\ba\in \cJ} \|\Psi_{\ba}\|_V^2
(\ba!)^{-\rho}\mfk{q}^{-\ell\ba}\right)^{1/2}
\left(\sum_{\ba\in \cJ}|\eta_{\ba}|^2
(\ba!)^{\rho}\mfk{q}^{\ell\ba}\right)^{1/2}.
\end{equation}

Let $\bld{z}=(z_1,z_2,\ldots)$ be a sequence of complex numbers such that
$\sum_{k\geq 1} |z_k|^2<\infty$.
The {\em stochastic exponential} of $\bld{z}$ is the random variable
\begin{eqnarray*}
\mathcal{E}(\bld{z})=\sum_{\boldsymbol{\alpha}\in \cJ}\frac{\bld{z}^{\boldsymbol{\alpha}}}{\sqrt{\boldsymbol{\alpha}!}}\,
\xi_{\boldsymbol{\alpha}}.
\end{eqnarray*}
Direct computations using \eqref{exp-sum}, \eqref{usual-sum} and
the generating function formula for the Hermite polynomials  show that
\begin{enumerate}
\item If $\sum_{k}q_k^{\ell}|z_k|^2<\infty$, then $\mathcal{E}(\bld{z})\in(\mathcal{S})_{\rho, \ell}(\mathbb{C})$ for every
     $0\leq \rho<1$;
    \item $\sum_{k}q_k^{\ell}|z_k|^2<1$, then
    $\mathcal{E}(\bld{z})\in(\mathcal{S})_{1, \ell}(\mathbb{C})$.
    \end{enumerate}

\begin{definition}
\label{strans}
The $S$-transform $\str{\Phi}$ of
$\Phi\in (\mathcal{S})_{-\rho,-\ell}(V)$ with chaos expansion
 \eqref{ChaosExp}  is
\begin{eqnarray*}
\str{\Phi}(\bld{z})=\langle \Phi, \mathcal{E}(\bld{z}) \rangle=
\sum_{\ba\in \cJ} \frac{\Phi_{\ba}}{\sqrt{\ba!}}\, \bld{z}^{\ba},
\end{eqnarray*}
defined for  $\bld{z}$  such that $\mathcal{E}(\bld{z})\in (\mathcal{S})_{\rho,\ell}(\mathbb{C})$.
\end{definition}

Note that $\Phi_{\bld{(0)}}=\str{\Phi}(0)$;  $\Phi_{\bld{(0)}}$
is called a {\em generalized expectation} of $\Phi$ and
even denoted by $\bE\Phi$, although
it is clear from the corresponding definitions that
a typical  element of $(\cS)_{-\rho,-\ell}$ for $\rho,\ell>0$ cannot
have any moments in the usual sense.

The following is a (partial)
characterization of the spaces $\mathcal{S}_{-\rho}(V)$
in terms of the $S$-transform.
\begin{theorem}[Characterization Theorem]
\label{characterization}

(a) If $\Phi\in(\mathcal{S})_{-\rho,-\ell}$ and $0\leq \rho<1$ then, for every
{\em real} sequences $\bld{p}$ and $\bld{r}$, such that $\mathcal{E}(\bld{p})\in (\mathcal{S})_{\rho,\ell}(\mathbb{C})$, $\mathcal{E}(\bld{r})\in (\mathcal{S})_{\rho,\ell}(\mathbb{C})$,
the function  $f(z)=\widetilde{\Phi}(z\bld{p}+\bld{q})$ is an entire function of $z\in\mathbb{C}$.

(b)
For $0<R,\ell<\infty$ let
\begin{eqnarray}
\label{neighb-zero}
\mathbb{K}_\ell(R)=\left\{ \bld{z}\ :\
\sum_{\boldsymbol{\alpha}\in\mathcal{J}}
\mfk{q}^{\ell\boldsymbol{\alpha}}
    |\bld{z}^{\boldsymbol{\alpha}}|^2<R^2\right\}.
\end{eqnarray}
If $\Phi\in(\mathcal{S})_{-1}(V)$, then there exist $R,\ell$ such that
$\str{\Phi}(\bld{z})$ is analytic in $\mathbb{K}_\ell(R)$.
Conversely, if $f=f(\bld{z})$ is a function analytic in $\mathbb{K}_\ell(R)$
for some $0<R,\ell<\infty$,
then there exists a unique $\Phi\in(\mathcal{S})_{-1}(V)$ such that
$\str{\Phi}=f$.

\end{theorem}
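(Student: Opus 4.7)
The plan is to combine the duality bound \eqref{dual1} with the two criteria for $\mathcal{E}(\bld{z})$ to lie in $(\cS)_{\rho,\ell}(\mathbb{C})$ listed just before Definition \ref{strans}. Specializing \eqref{dual1} to $\eta=\mathcal{E}(\bld{w})$ turns its left-hand side into $\str{\Phi}(\bld{w})$ and bounds it by $\|\Phi\|_{-\rho,-\ell;V}\,\|\mathcal{E}(\bld{w})\|_{\rho,\ell}$, so in all three claims the task reduces to controlling $\|\mathcal{E}(\bld{w})\|_{\rho,\ell}$ for an appropriate $\bld{w}$.

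For part (a) (reading $\bld{q}$ as the obvious typo for $\bld{r}$), I take $\bld{w}=z\bld{p}+\bld{r}$ and must bound $\|\mathcal{E}(z\bld{p}+\bld{r})\|_{\rho,\ell}$ uniformly for $|z|\leq M$. The elementary inequality $|zp_k+r_k|\leq(1+|z|)\max(|p_k|,|r_k|)$ gives $|(z\bld{p}+\bld{r})^\ba|^2\leq(1+|z|)^{2|\ba|}\bld{m}^{2\ba}$ with $m_k=\max(|p_k|,|r_k|)$, and hence $\|\mathcal{E}(z\bld{p}+\bld{r})\|^2_{\rho,\ell}\leq\|\mathcal{E}((1+|z|)\bld{m})\|^2_{\rho,\ell}$. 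The assumption $\mathcal{E}(\bld{p}),\mathcal{E}(\bld{r})\in(\cS)_{\rho,\ell}(\mathbb{C})$ already implies $\sum_k q_k^\ell p_k^2<\infty$ and $\sum_k q_k^\ell r_k^2<\infty$ (read off the $\ba=\km$ terms of the defining sums), so $\sum_k q_k^\ell(1+M)^2 m_k^2<\infty$ and the first criterion before Definition \ref{strans} delivers the required finiteness for every $0\leq\rho<1$. Since each partial sum $\sum_{|\ba|\leq N}(\Phi_\ba/\sqrt{\ba!})(z\bld{p}+\bld{r})^\ba$ is a $V$-valued polynomial in $z$, uniform convergence on $|z|\leq M$ makes the limit $f(z)$ entire.

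For part (b), the forward direction rests on the clean identity $\|\mathcal{E}(\bld{z})\|^2_{1,\ell}=\sum_\ba\mfk{q}^{\ell\ba}|\bld{z}^\ba|^2$, which is exactly the quantity defining $\mathbb{K}_\ell(R)$. If $\Phi\in(\cS)_{-1,-\ell}(V)$, then \eqref{dual1} gives $\|\str{\Phi}(\bld{z})\|_V\leq R\|\Phi\|_{-1,-\ell;V}$ on $\mathbb{K}_\ell(R)$, and uniform convergence of the polynomial partial sums on $\mathbb{K}_\ell(R')$ for every $R'<R$ yields analyticity. For the converse, given analytic $f$ on $\mathbb{K}_\ell(R)$ with power-series expansion $\sum_\ba f_\ba\bld{z}^\ba$, I set $\Phi_\ba=\sqrt{\ba!}f_\ba$ and $\Phi=\sum_\ba\Phi_\ba\xi_\ba$, so $\str{\Phi}=f$ automatically and uniqueness is immediate from matching Taylor coefficients. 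The only remaining point is the bound $\sum_\ba\|f_\ba\|_V^2\mfk{q}^{-\ell'\ba}<\infty$ for some $\ell'$: I plan to choose radii $\rho_k=cq_k^{-m/2}$ with $m$ so large that $\sum_k q_k^{\ell-m}<\infty$ (using \eqref{eq:HSh}) and $c$ so small that $\sum_\ba\mfk{q}^{\ell\ba}\bld{\rho}^{2\ba}<R^2$; the polydisk $\{|z_k|\leq\rho_k\}$ then lies inside $\mathbb{K}_\ell(R)$, Cauchy's inequality gives $\|f_\ba\|_V\leq M\bld{\rho}^{-\ba}$, and a final choice $\ell'>m$ with $\sum_k q_k^{m-\ell'}<\infty$ makes the target sum finite.

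The main obstacle I anticipate is the Cauchy estimate in the converse of (b), because $\mathbb{K}_\ell(R)$ is an infinite-dimensional domain and \emph{analytic on $\mathbb{K}_\ell(R)$} must be interpreted with some care. The workaround is to fix any finite $F\subset\mathbb{N}$, restrict $f$ to the slice $\{(z_k)_{k\in F}:|z_k|\leq\rho_k\}\times\{0\}$ (which lies inside $\mathbb{K}_\ell(R)$), apply the ordinary finite-dimensional Cauchy formula there to get the bound for every $\ba$ supported in $F$, and then let $F$ exhaust $\mathbb{N}$.
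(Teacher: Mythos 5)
The paper does not actually prove this theorem --- part (a) is delegated to Kuo and part (b) to Holden et al.\ --- so you are supplying an argument where the paper has none; what you write is essentially the standard proof from those references. Part (a) and the forward half of (b) are correct as you set them up: specializing \eqref{dual1} to $\eta=\mathcal{E}(\bld{w})$, using the identity $\|\mathcal{E}(\bld{z})\|^2_{1,\ell}=\sum_{\ba}\mfk{q}^{\ell\ba}|\bld{z}^{\ba}|^2$ (which is exactly the defining quantity of $\mathbb{K}_\ell(R)$), and exploiting the term-by-term monotonicity of $\|\mathcal{E}(\bld{w})\|^2_{\rho,\ell}=\sum_{\ba}(\ba!)^{\rho-1}\mfk{q}^{\ell\ba}|\bld{w}^{\ba}|^2$ in the moduli $|w_k|$ is the right mechanism, and reading off $\sum_k q_k^{\ell}p_k^2<\infty$ from the $\ba=\boldsymbol{\epsilon(k)}$ terms is legitimate. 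One cosmetic repair: your ``partial sums over $|\ba|\le N$'' are still infinite series (there are infinitely many $\ba$ with $|\ba|\le N$), so you should take finite subsets of $\cJ$ exhausting it; the Cauchy--Schwarz tail estimate you already have gives uniform convergence of that net of genuine polynomials, and the conclusion is unchanged.

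The genuine gap is in the converse half of (b). Your Cauchy estimate $\|f_{\ba}\|_V\le M\,\bld{\rho}^{-\ba}$ needs a \emph{single} constant $M$ bounding $\|f\|_V$ on the whole infinite-dimensional polydisk $\{|z_k|\le\rho_k\}$. The workaround you propose --- restricting to finite slices $F\subset\mathbb{N}$ and letting $F$ exhaust $\mathbb{N}$ --- only produces $\|f_{\ba}\|_V\le M_F\,\bld{\rho}^{-\ba}$ for $\ba$ supported in $F$, with $M_F$ the supremum of $\|f\|_V$ over the $F$-slice; nothing in ``analytic on every finite-dimensional slice'' prevents $M_F\to\infty$ as $F$ grows, and then $\sum_{\ba}\|f_{\ba}\|_V^2\,\mfk{q}^{-\ell'\ba}$ need not converge for any $\ell'$. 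This is precisely why the reference the paper cites (Holden et al., Theorem 2.6.11) assumes $f$ is analytic \emph{and bounded} on $\mathbb{K}_\ell(R)$; with that hypothesis added (or with ``analytic'' read in the usual infinite-dimensional sense of locally bounded and Gateaux-analytic), your argument closes, including the choice $\rho_k=cq_k^{-m/2}$ with $\sum_k q_k^{\ell-m}<\infty$ and the final choice of $\ell'$ large enough that $c^{-2}q_k^{m-\ell'}<1$ and $\sum_k q_k^{m-\ell'}<\infty$. A small additional caveat: since the $\ba=\zm$ term contributes $1$ to the sum defining $\mathbb{K}_\ell(R)$, that neighborhood is empty unless $R>1$, so your step ``choose $c$ small so that $\sum_{\ba}\mfk{q}^{\ell\ba}\bld{\rho}^{2\ba}<R^2$'' tacitly uses $R>1$ --- which is the only nonvacuous case, but is worth saying.
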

\begin{proof}
(a) See Kuo \cite[Theorem 8.10]{kuo}. There is also a converse statement
characterizing entire functions with certain growth at infinity as $S$-transforms of
elements from $(\mathcal{S})_{-\rho}(V)$.

(b) See Holden et al.  \cite[Theorem 2.6.11]{HOUZ}.
Note that, given $f=\str{\Phi}$, we recover $\Phi=\sum_{\ba} \Phi_{\ba}\xi_{\ba}$
by
\begin{equation}
\label{coef-main}
\Phi_{\ba}=\frac{1}{\sqrt{\ba!}}
\frac{\partial^{|\ba|}\str{\Phi}(\bld{z})}{\partial z_1^{\alpha_1}\partial z_2^{\alpha_2}\cdots}\Big|_{\bld{z}=0}.
\end{equation}
\end{proof}

In what follows, we  refer to  $\mathbb{K}_\ell(R)$ from
\eqref{neighb-zero} as a
 {\em neighborhood} of $\bld{z}=0$.

The significance of Theorem \ref{characterization} is that it provides an
{\em intrinsic} characterization of the spaces $(\cS)_{-\rho}(V)$
in terms of the $S$-transform and the spaces of analytic functions,
as opposed to the {\em extrinsic} characterization of Definition \ref{def-sp}
using a particular orthonormal basis in $\mathbb{L}_2(\bld{\xi};V)$.
We also see why the values of $\rho$ are restricted to $[0,1]$: for
$\rho>1$,   stochastic exponentials are never test functions,  making it 
 impossible to define the $S$-transform.

Finally, we define the Wick product $\diamond$ for two elements
of $(\mathcal{S})_{-1}(\mathbb{C})$.

\begin{definition}
Given $\Phi,\Psi\in (\mathcal{S})_{-1}(\mathbb{C})$,
$\Phi\diamond \Psi$ is the unique element
of $(\mathcal{S})_{-1}(\mathbb{C})$ such that
\begin{equation}
\label{WP1}
\str{\Phi\diamond \Psi}(\bld{z})=\str{\Phi}(\bld{z})\str{\Psi}(\bld{z})
\end{equation}
\end{definition}
An immediate consequence of \eqref{WP1} is a useful property of
generalized expectations:
$$
\bE(\Phi\diamond \Psi)=\big(\bE\Phi\big)\big(\bE \Psi\big).
$$
Formula \eqref{WP1} is motivated by the following property of the
Wick product (which can be traced back to the original paper
by Wick \cite{Wick-orig}):
\begin{equation}
\label{WP2}
\Hepo_m(\xi_k)\diamond \Hepo_n(\xi_{l})=
\begin{cases}
\Hepo_m(\xi_k) \Hepo_n(\xi_{l}),& \ {\rm if} \ k\not=l;\\
\Hepo_{m+n}(\xi_k),& \ {\rm if} \ k=l.
\end{cases}
\end{equation}
Writing $\Hepo_{\ba}(\bld{\xi})=\sqrt{\ba!} \xi_{\ba}$,
we conclude from \eqref{WP2} that
\begin{equation}
\label{WP3}
\Hepo_{\ba}(\bld{\xi})\diamond \Hepo_{\bld{\beta}}(\bld{\xi})=
\Hepo_{\ba+\bld{\beta}}(\bld{\xi}).
\end{equation}
If
$$
\Phi=\sum_{\ba} \bar{\Phi}_{\ba}\Hepo_{\ba}(\bld{\xi}), \ \
\Psi=\sum_{\ba} \bar{\Psi}_{\ba}\Hepo_{\ba}(\bld{\xi}),
$$
then formal term-by-term multiplication using \eqref{WP3}
implies
\begin{equation}
\label{WickProd-main}
\Phi\diamond \Psi=\sum_{\ba,\bld{\beta}} \bar{\Phi}_{\ba}\bar{\Psi}_{\bld{\beta}}
\Hepo_{\ba+\bld{\beta}}(\bld{\xi}),
\end{equation}
which is consistent with \eqref{WP1}. Note also that
if
$$
\Phi=\sum_{\ba} {\Phi}_{\ba}\xi_{\ba} \ \
\Psi=\sum_{\ba} {\Psi}_{\ba}\xi_{\ba},
$$
then $\Psi_{\ba}=\bar{\Phi}_{\ba}\sqrt{\ba!}$ and
\begin{equation}
\label{WickProd-main1}
\Phi\diamond \Psi=\sum_{\ba,\bld{\beta}} \sqrt{\frac{(\ba+\beta)!}{\ba!\bld{\beta}!}}{\Phi}_{\ba}{\Psi}_{\bld{\beta}}
\xi_{\ba+\bld{\beta}},
\end{equation}
which is not as convenient as \eqref{WickProd-main}.

\section{Existence and uniqueness of solution}
\label{sec2}

Let $G=\bR$ or $G=S^1$ (the circle, which corresponds to periodic boundary
conditions and which we identify with $\bR/(0,\pi)$).
Denote by $H^{\gamma}(G)$, $\gamma\in\bR$, the Sobolev spaces on $G$
 and by $\|\cdot\|_{\gamma}$ the corresponding norms;
  $H^0(G)=L_2(G)$. Recall that, for $G=\bR$,
  $$
  \|f\|_{\gamma}^2=\int_{\bR}(1+y^2)^{\gamma}|\hat{f}(y)|^2dy,
  $$
  where $\hat{f}$ is the Fourier transform of $f$, and for $G=S^1$,
  $$
  \|f\|_{\gamma}^2=\sum_k (1+k^2)^{\gamma} |f_k|^2,
  $$
  where $f_k$ are the Fourier coefficients of $f$.  Also recall that,
  by the Sobolev embedding theorem, every element (equivalence class) from $H^1(G)$ has a representative  that is a bounded and \Holder
   continuous function on $G$.
 To avoid unnecessary technical complications, we will simply say
 that if  $f\in H^1(G)$, then $f$ is bounded and \Holder continuous, and
 $\sup_{x\in G}|f(x)|\leq C\|f\|_1$ for some $C$ independent of $f$.

Consider the equation
\begin{equation}
 \label{SB-gen}
 \begin{split}
  u_t(t,x)&+u(t,x)\diamond u_x(t,x)=u_{xx}(t,x)+f(t,x)\\
  &+\sum_{k\geq 1}
  (a_k(t,x)u_{xx}+b_k(t,x)u_x+c_k(t,x)u+g_k(t,x))\diamond \xi_k, \\
  &\ 0<t\leq T,\ x\in G,
 \end{split}
\end{equation}
with initial condition $u(0,x)\in (\cS)_{-1}(L_2(G))$,
where $\xi_k$ are iid standard normal random variables,
and $f,a_k,b_k,c_k$ and $g_k$ are {\em non-random} measurable functions.
To define the solution we assume that $f,g_k\in L_2((0,T)\times G)$ and
\begin{equation}
\label{bound0}
\sup_{t,x}|a_k(t,x)|+\sup_{t,x}|b_k(t,x)|+\sup_{t,x}|c_k(t,x)|+
\|g_k\|_{L_2((0,T)\times G)} \leq q_k^{r}
\end{equation}
for $q_k$ from Definition \ref{def:GCS} and some $r>0$.

\begin{definition}
\label{def-solution}
 The process  $u\in (\cS)_{-1}\big((L_2((0,T);H^2(G))\big)$,
 is called a {\em strong chaos solution} of \eqref{SB-gen} if
 there exist  $0<R,\ell<\infty$ such that, for all
 $\bld{z}=(z_1,z_2,\ldots) \in \mathbb{K}_\ell(R)$ and
 all $t\in [0,T]$,   the equality
 \begin{equation}
\label{stransburger}
 \begin{split}
  \str{u}(t,x;\bld{z})
  &+\int_0^t\str{u}(s,x;\bld{z})\str{u}_x(s,x;\bld{z})ds
  =\str{u}(0,x;\bld{z})+\int_0^t\str{u}_{xx}(s,x;\bld{z})ds\\
  &+\sum_{k\geq 1}
  \int_0^t\Big(a_k(s,x)\str{u}_{xx}+b_k(s,x)\str{u}_x
  +c_k(t,x)\str{u}+g_k(t,x)\Big)z_kds
 \end{split}
\end{equation}
 holds in $L_2(G)$.
 \end{definition}

The a priori assumption $u\in (\cS)_{-1}\big((L_2((0,T);H^2(G))\big)$ implies
$$
\dual{u}{\eta}\in L_2((0,T);H^2(G))
$$
for all $ \eta\in (\cS)_{1}(\mathbb{C})$.
The $S$-transform $\str{u}$ of $u$ is thus an element of
$L_2((0,T);H^2(G))$, so that
$\str{u}_x\in L_2((0,T);H^1(G))$ and
$\str{u}_x\in L_2((0,T)\times G)$.
By the Sobolev embedding theorem, $\str{u}, \str{u}_x \in
L_2((0,T);\cC(G))$, making the product
$\str{u}\str{u}_x$  well defined. Condition
\eqref{bound0} ensures uniform in $(t,x)$ convergence of
all infinite sums.

The chaos solution is a variational solution in the
space $(\cS)_{-1}\big(L_2((0,T);H^2(G))\big)$, and  the characterization theorem
(Theorem \ref{characterization})
makes it possible to restrict the set of test functions to
   stochastic exponentials. The resulting deterministic equation for the $S$-transform
\eqref{stransburger}
(which follows from \eqref{SB-gen} by the linearity of the $S$-transform and
the definition of the Wick product  \eqref{WP1})
can be satisfied in a variety of ways (classical, variational, viscosity, etc.)
Our a priori assumptions on $u$ allow us to satisfy the equation for
$\str{u}$ in the {\em strong}
sense, because all the necessary partial derivatives of $\str{u}$ in $t$ and $x$
exist as $L_2$ functions. This is why  the resulting solution of \eqref{SB-gen}
is called a {\em strong chaos solution}.

While our main objective is to establish existence and uniqueness of
solution of \eqref{SB-gen}, we will start by showing that the space
$(\mathcal{S})_{-1}$ is indeed the natural solution space.
In other words, we should not expect the solution of
\eqref{SB-gen} to belong to any $(\cS)_{-\rho}$ if $\rho<1$.

\begin{theorem}
\label{th:non-exist}
Let $\phi=\phi(x), \ x\in \bR$ be a smooth compactly supported
function and let $\xi$ be a standard Gaussian random variable.
Then equation
\begin{equation}
\label{eq:easy}
u_t+u\diamond u_x=u_{xx},\ 0<t\leq T,\ x\in \bR,
\end{equation}
with initial condition $u(0,x)=\xi\, \phi(x)$ cannot have a solution
that is an element of $(\cS)_{-\rho,-\ell}(L_2((0,T); H^2(\bR))$
for some $0\leq \rho<1$ and $\ell>0$.
\end{theorem}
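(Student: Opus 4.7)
The plan is to argue by contradiction, combining the $S$-transform with the Cole-Hopf representation of the viscous Burgers equation.

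First, reduce to a deterministic PDE. Since the only source of randomness in \eqref{eq:easy} is $\xi = \xi_1$, the $S$-transform $\str u(t,x;\bld z)$ can depend only on the first coordinate $z_1=:z$; set $v(t,x;z) := \str u(t,x;z,0,0,\ldots)$. By linearity of the $S$-transform and \eqref{WP1}, $v$ satisfies the deterministic viscous Burgers equation
\begin{equation*}
v_t + v v_x = v_{xx}, \qquad v(0,x;z) = z\,\phi(x).
\end{equation*}
The only nonzero chaos coefficients of $u$ correspond to multi-indices of the form $(n,0,0,\ldots)$, and satisfy $u_n = \sqrt{n!}\,v_n$ with $v_n(t,x) = \tfrac{1}{n!}\partial_z^n v(t,x;0)$.

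Next, exploit the restriction $\rho<1$. Assuming $u \in (\cS)_{-\rho,-\ell}(L_2((0,T);H^2(\bR)))$ for some $\rho\in[0,1)$ and $\ell>0$, one has $\sum_n (n!)^{1-\rho}\,q_1^{-\ell n}\,\|v_n\|_{L_2((0,T);H^2(\bR))}^2 < \infty$, and since $1-\rho>0$ the factor $(n!)^{1-\rho}$ grows super-exponentially, forcing $\|v_n\|^{1/n}\to 0$. Hence $z\mapsto v(\cdot,\cdot;z)$ extends to an entire function valued in $L_2((0,T);H^2(\bR))$, and in particular in $L_2((0,T)\times\bR)$.

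The contradiction comes from Cole-Hopf: writing $v = -2w_x/w$ with $w_t = w_{xx}$, $w(0,x;z) = \exp(-z\Phi(x)/2)$ and $\Phi(x) = \int_{-\infty}^x \phi(y)\,dy$, one has $w(t,x;z) = \int_\bR K_t(x-y)e^{-z\Phi(y)/2}\,dy$. Fix any $(t_0,x_0)\in(0,T)\times\bR$ and set $F(z):=w(t_0,x_0;z)$. Then $F$ is entire of order $1$ with $F(0)=1$, so if $F$ had no zeros, Hadamard factorization would force $F(z)=e^{az}$. But with $Y\sim\mathcal{N}(x_0,2t_0)$, Jensen's strict inequality applied to $u\mapsto u^2$ and the non-constant random variable $\Phi(Y)$ (non-constant since $\phi\not\equiv 0$) gives $F''(0) = \tfrac{1}{4}\bE[\Phi(Y)^2] > \tfrac{1}{4}(\bE\Phi(Y))^2 = (F'(0))^2$, contradicting $F(z)=e^{az}$. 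Hence $F$ admits a zero $z_0\in\bC$. At this $z_0$, $w(\cdot,\cdot;z_0)$ vanishes at $(t_0,x_0)$; generically $w_x(t_0,x_0;z_0)\neq 0$, in which case $\{w(\cdot,\cdot;z_0)=0\}$ is a smooth real-analytic curve near $(t_0,x_0)$ and $v(\cdot,\cdot;z_0)=-2w_x/w$ grows like the reciprocal of the distance to this curve, so $\iint_{(0,T)\times\bR}|v(t,x;z_0)|^2\,dx\,dt = \infty$; this contradicts the entireness in $L_2$ established in the previous paragraph.

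The main technical obstacles are the Hadamard/Jensen existence of a complex zero of $w$ and the local singularity analysis of $v$ near the zero curve. The degenerate case $w_x(t_0,x_0;z_0)=0$ can be handled by perturbing $(t_0,x_0)$, since the subset $\{(t,x):w(t,x;z_0)=0,\ w_x(t,x;z_0)\neq 0\}$ is open and dense in the zero set by real-analyticity of $w$ in $(t,x)$.
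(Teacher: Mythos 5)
Your first half tracks the paper's own proof closely: reduce to the deterministic complex Burgers equation for $\str{u}(t,x;z)$, use $\rho<1$ to get entireness of $z\mapsto\str{u}$ (the paper invokes the characterization theorem; your explicit estimate $\sum_n(n!)^{1-\rho}q_1^{-\ell n}\|v_n\|^2<\infty$ is a correct, more self-contained version), and pass to the heat equation via Cole--Hopf. Your Hadamard-plus-Jensen argument that $F(z)=w(t_0,x_0;z)$ must have a complex zero is correct and is a nice elementary substitute for the paper's citation of Lukacs' theorem on entire characteristic functions. The problem is the last step, where you and the paper part ways, and where your route has a genuine gap. You conclude by asserting that the entire $L_2((0,T)\times\bR)$-valued extension of $z\mapsto v(\cdot,\cdot;z)$ \emph{equals} $-2w_x/w$ at $z=z_0$, and that the latter is not square integrable. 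But the identity theorem only identifies the two on the connected component, containing $z=0$, of the open set of $z$ for which $-2w_x/w$ is an analytic $L_2$-valued function; the singular set $\{z:\ w(t,x;z)=0\ \text{for some }(t,x)\}$ is the projection of a real-analytic variety in $(t,x,z)$ and can be two-dimensional, so it may disconnect the plane or fail to be discrete, and nothing guarantees that $z_0$ is reachable from $0$, nor that the $L_2$ norm of $-2w_x/w$ blows up as one approaches the boundary of that component. The paper avoids this entirely: it argues pointwise in $z$ for fixed $(t,x)$, showing that entireness of $\str{u}$ would force $V(t,x;z)/V(t,0;z)$ to be a zero-free entire function of finite order, hence of the form $h(t,x)e^{zf(t,x)}$, which makes $\str{u}=-2V_x/V$ linear in $z$ --- impossible. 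That contradiction needs no identification of the extension with the Cole--Hopf formula at singular values of $z$.

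A secondary but real error: for fixed complex $z_0$, $w(\cdot,\cdot;z_0)$ is a \emph{complex-valued} function of the two real variables $(t,x)$, so its zero set is the common zero set of two real-analytic functions and is generically a discrete set of points, not ``a smooth real-analytic curve.'' The space-time $L_2$ divergence of $w_x/w$ then hinges on how the complex zero $X(t)$ of $x\mapsto w(t,x;z_0)$ leaves the real axis as $t$ moves away from $t_0$ (one gets $\|v(t,\cdot)\|_{L_2(\bR)}^2\sim 1/|\mathrm{Im}\,X(t)|$, and one must check $\int|\mathrm{Im}\,X(t)|^{-1}dt=\infty$); this is provable using finite vanishing order of the real-analytic function $\mathrm{Im}\,X$, but it is not the computation you describe, and ``generically'' / ``open and dense'' does not dispose of the degenerate configurations. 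To repair the proof you should either carry out the component-and-boundary analysis sketched above in full, or switch to the paper's endgame, which replaces the $L_2$ blow-up argument with the structure theory of zero-free entire functions.
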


\begin{proof} We will show that the
$S$-transform $\str{u}$ of $u$ cannot be an entire function, as required
by Theorem \ref{characterization} for $u$ to be an element of $(\cS)_{-\rho,\ell},$
 $\rho<1$.

By \eqref{stransburger},
\begin{equation}
\label{Str-Be1}
\str{u}_t(t,x;z)+\str{u}(t,x;z)\str{u}_{x}(t,x;z)=\str{u}_{xx}(t,x;z),
\end{equation}
$\str{u}(0,x;z)=z\phi(x)$; with only one random variable $\xi$, we have only
one complex parameter $z$.

Equation \eqref{Str-Be1} is the
usual Burgers equation and has a closed-form solution via the
Hopf-Cole transformation (see Evans \cite[Section 4.4.1]{Evans}):
writing $F(x)=\int_{-\infty}^x \phi(y)dy$, we find
\begin{equation}
\label{Str-Be2}
\str{u}(t,x;z)=\frac{\int_{-\infty}^{+\infty}
z\phi(y) \exp\left(-\frac{(x-y)^2}{4t}-\frac{zF(y)}{2}\right)dy}{\int_{-\infty}^{+\infty}
 \exp\left(-\frac{(x-y)^2}{4t}-\frac{zF(y)}{2}\right)dy}.
 \end{equation}
This is a classical solution of \eqref{Str-Be1} and leads to a
classical chaos solution of \eqref{eq:easy}; the solution of \eqref{Str-Be1}
is unique in the
 class $L_2((0,T);H^2(\bR))$ (see Biler et al.
 \cite[Theorem 2.1]{fractal-R}).
 If $u(t,x)\in (\mathcal{S})_{-\rho,-\ell}(\bR)$ for every $t,x$, then, by Theorem \ref{characterization}(a), $\str{u}(t,x;z)$ must be an analytic function of
$z$ for all $t\in [0,T]$, $x\in \bR$, $z\in \mathbb{C}$.
 Since it  is not immediately clear from \eqref{Str-Be2} whether the
 dependence of $\str{u}$ on $z$ is analytic, we will transform
 equation \eqref{Str-Be2} further by going back to the derivation of the
 Hopf-Cole transformation.

 Consider the  equation
 \begin{equation}
 \label{Str-Be3}
 v_t+\frac{1}{2}|v_x|^2=v_{xx},\ 0<t\leq T,\ x\in \bR,
 \end{equation}
 with initial condition $v(0,x;z)=zF(x)=z\int_{-\infty}^x \phi(y)dy$.
 Information about $v$ leads to information about $\str{u}$
  because of the relation $v_x=\str{u}$.
 Note that $F$ is a smooth bounded function.
 Direct computations (see \cite[Section 4.4.1]{Evans})
 show that the function $V$ defined by
 \begin{equation}
 \label{Str-Be4}
 V(t,x;z)=\exp\Big(-v(t,x;z)/2\Big)
 \end{equation}
 satisfies the heat equation 
 \begin{equation}
 \label{Str-Be5}
 V_t=V_{xx},\ V(0,x;z)=\exp\Big(-zF(x)/2\Big).
 \end{equation}
 Equation \eqref{Str-Be5} has a classical solution which is unique in
 the class of bounded twice continuously differentiable functions. Interpreting the
 heat kernel as the normal density, the solution of \eqref{Str-Be5}
 can be written as
 \begin{equation}
 \label{Str-Be6}
 V(t,x;z)=\bE e^{-z\zeta(t,x)},
 \end{equation}
 where $\zeta(t,x)=(1/2)F\big(x+\sqrt{2}w(t)\big)$ and $w$
 is a standard Brownian
 motion.

 We are now ready to show that the function $\str{u}$ from
 \eqref{Str-Be2} cannot be entire (that is, analytic for all $z\in \mathbb{C}$).
   The argument goes as follows:
 \begin{enumerate}
 \item The function $V$ defined in \eqref{Str-Be6} is an extension to the
 complex plain of the characteristic function of a uniformly bounded non-degenerate
 random variable $\zeta$ and is therefore an entire function of  the form
  \begin{equation}
  \label{Str-Be61}
  V(t,x;z)=V_0(t,x;z)e^{g(t,x)z},
  \end{equation}
  where $V_0(t,x;z)$ is an entire function with infinitely many
  zeroes (Lukacs \cite[Theorem 7.2.3]{lukacs}).
 \item Representation \eqref{Str-Be4} then implies that
 the function $v$ cannot be an entire function of $z$; otherwise
 $V$ would have no zeroes;
 \item By uniqueness of solutions of \eqref{Str-Be1} and
 \eqref{Str-Be3} we conclude that $v_x(t,x;z)=\str{u}(t,x;z)$, because
 both functions satisfy the same equation with the same initial condition.
 \item By the fundamental theorem of calculus, $v(t,x;z)=v(t,0;z)
     +\int_0^x\str{u}(t,y;z)dy$, which means that if $\str{u}$ were an
     entire function of $z$, so would be $v(t,x;z)-v(t,0;z)$.
 \item If $v(t,x;z)-v(t,0;z)$ is entire, then, by \eqref{Str-Be4},
 so is
 $$
 \bar{V}(t,x;z)=\frac{V(t,x;z)}{V(t,0;z)}
  $$
  and moreover, $\bar{V}$, as a function of $z$, has no zeros
  (because $1/\bar{V}$ corresponds to $v(t,0;z)-v(t,x;z)$ and must be entire
  as well); then \eqref{Str-Be61} and some basic complex analysis imply
 \begin{equation}
 \label{Str-Be62}
 \bar{V}(t,x;z)=h(t,x)e^{zf(t,x)}\ {\rm or}\  V(t,x;z)=V(t,0,z)h(t,x)e^{zf(t,x)}.
 \end{equation}
 \item Finally, \eqref{Str-Be62} and the equality $\str{u}=-2V_x/V$ imply
 that (if we assume  $\str{u}$ is entire, forcing  $v(t,x;z)-v(t,0;z)$ to
 be entire, in turn forcing  $V$ to have the form \eqref{Str-Be62})
  the solution $\str{u}$ of
 \eqref{Str-Be1} must be a linear function of $z$,
  which is impossible and provides the
 required contradiction.
 \end{enumerate}

 Note that the key step in the argument, namely,  showing that the function
 $V(t,x;z)$ cannot have the same zeros in $z$ for all $x$,  can be carried out in
 several different ways.

 \noindent {\em This concludes the proof of Theorem \ref{th:non-exist}}.
 \end{proof}

 Note that equation \eqref{eq:easy} without the Wick product,
 \begin{equation}
 \label{Str-Be7}
 U_t+UU_x=U_{xx},\ U(0,x)=\xi\,\phi(x),
 \end{equation}
has a classical solution which is a very regular stochastic process.
Indeed, the solution is given by the right-hand side of \eqref{Str-Be2}
with $\xi$ instead of $z$, from which it immediately follows that
$U(t,x)$ is a smooth function of $(t,x)$ and
$$
|U(t,x)|\leq c_1|\xi|e^{c_2|\xi|}
$$
for some positive numbers $c_1, c_2$. That is, as a random variable,
$U$ has moments of all orders. By contrast, the solution of
\eqref{eq:easy} (provided it exists) is a generalized random element from
$(\cS)_{-1}(\bR)$. In other words, Theorem  \eqref{th:non-exist}
delivers the ``curse'' part of paper's title: Wick product in the Burgers equation
indeed forces the solution into the largest space of generalized random elements.
The following observation provides a bit of relief: with all the nice properties
of $U$, there is no easy way to get $\bE U(t,x)$; on the other hand, for the solution of  \eqref{eq:easy}, the generalized expectation $\str{u}(t,x;0)$
 solves the deterministic
Burgers equation with zero initial condition and is therefore equal to zero.

\vskip 0.05 in

The next  theorem delivers the ``cure'' part by establishing
solvability of \eqref{SB-gen} in $(\cS)_{-1}\big(L_2((0,T);H^2(G))\big)$
For technical reasons, the conditions  are slightly different for
$G=\bR$ and $G=S^1$; in particular, we have to  consider the ``homogeneous''
case $f=0$ when $G=S^1$. We discuss these and other questions in the following section. Recall that, for a generalized random element $\Phi$,
$\bE\Phi$ denotes its generalized expectation, and $\bE\Phi=\Phi_{\bld{(0)}}=
\str{\Phi}(0)$.

\begin{theorem}
\label{th:mainRS}
Assume that
\begin{itemize}
\item condition \eqref{bound0} holds;
\item $u_0\in (\cS)_{-1,-\ell}(H^1(G))$ for some $\ell>0$ and
$\bE u_0\in H^2(G)$;
\item If $G=\bR$, then also $\bE u_0\in  L_1(\bR)$ and
$f(t,x)=F_x(t,x)$ for a function $F$ such that both  $F$ and $F_x$ are  bounded and \Holder continuous in $(t,x)$;
\item If $G=S^1$, then also $f=0$.
\end{itemize}
Under these assumptions,  equation \eqref{SB-gen} has  a unique solution and
there exists an $n>0$ such that
\begin{equation}
\label{bound-R}
\|u\|_{-1,-n;L_2((0,T);H^2(\bR))}^2+\sup_{0\leq t \leq T}
\|u(t,\cdot)\|_{-1,-n;H^1(\bR)}^2<\infty.
\end{equation}

\end{theorem}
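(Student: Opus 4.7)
The plan is to construct the solution via its propagator, i.e.\ the triangular cascade of deterministic PDEs satisfied by the chaos coefficients $u_{\ba}$, and then use Theorem \ref{characterization} to package it as an element of $(\cS)_{-1}$. Substituting $u=\sum_{\ba}u_{\ba}\xi_{\ba}$ into \eqref{SB-gen} and applying the Wick-multiplication formula \eqref{WickProd-main1} decouples the equation level by level: the zero-chaos coefficient $u_{\zm}$ satisfies the deterministic Burgers equation $\partial_t u_{\zm}+u_{\zm}\partial_x u_{\zm}=\partial_{xx}u_{\zm}+f$ with initial condition $\bE u_0$, while for $|\ba|\geq 1$ the coefficient $u_{\ba}$ satisfies a linear parabolic equation whose principal part is $\partial_t - \partial_{xx}+\partial_x(u_{\zm}\,\cdot\,)$ and whose inhomogeneity is a finite algebraic expression in $\{u_{\bbt}\}_{\bbt<\ba}$, the functions $a_k,b_k,c_k$, and the data $g_k$.

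First I would treat the zero-chaos equation. The hypotheses have been tuned exactly so that Hopf--Cole applies globally: for $G=\bR$ the antiderivative structure $f=F_x$ with $F,F_x$ bounded and \Holder\ plus $\bE u_0\in H^2\cap L_1$ gives a bounded classical solution $u_{\zm}$ with $u_{\zm}\in L_\infty((0,T);H^2(\bR))$ via the explicit Hopf--Cole formula; for $G=S^1$ the condition $f=0$ makes Hopf--Cole yield a smooth bounded solution by standard parabolic regularity. In particular $u_{\zm}$ and $\partial_x u_{\zm}$ are uniformly bounded on $[0,T]\times G$, which allows one to use the linear equations for $u_{\ba}$, $|\ba|\geq 1$, in a standard variational/energy framework. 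Setting $M_{\ba}:=\|u_{\ba}\|_{L_2((0,T);H^2(G))}+\sup_{t}\|u_{\ba}(t,\cdot)\|_{H^1(G)}$, a parabolic energy estimate for each linear equation produces a recursion of the schematic form
\[
M_{\ba}\;\leq\; K\!\!\sum_{\zm<\bbt<\ba}\!\!\sqrt{\tfrac{\ba!}{\bbt!(\ba-\bbt)!}}\,M_{\bbt}M_{\ba-\bbt}\;+\;K\!\!\sum_{k:\alpha_k\geq 1}\!q_k^{r}\,M_{\ba-\km}\;+\;K\,\mfk{q}^{r\ba}\,\mathbf{1}_{|\ba|=1},
\]
the binomial factors being those of \eqref{WickProd-main1} and the $q_k^{r}$ factors coming from \eqref{bound0}.

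The main obstacle is the combinatorial control of this recursion: iterating a quadratic convolution sum is what generates Catalan numbers, and one must absorb the binomial coefficients $\sqrt{\ba!/(\bbt!(\ba-\bbt)!)}$ together with the noise weights $q_k^{r}$. The key lemma I would prove by induction on $|\ba|$ is an estimate of the form $M_{\ba}\leq A\,B^{|\ba|}\,C_{|\ba|}\,\sqrt{\ba!}\,\mfk{q}^{s\ba}$ for suitable $A,B$ and an exponent $s$ depending on $r$, where $C_n$ is the $n$-th Catalan number; the induction step reduces the quadratic part to the identity $C_n=\sum_{k=0}^{n-1}C_kC_{n-k-1}$, while the linear noise part is handled by bumping $s$ in terms of $r$.

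Once this coefficient bound is in hand, \eqref{bound-R} follows by choosing $n$ large enough to dominate all weights: using $C_n\leq 4^n$, the factorial inequality \eqref{factorial-ineq}, and the sum \eqref{usual-sum}, the series $\sum_{\ba}M_{\ba}^2\mfk{q}^{-n\ba}$ converges. Uniqueness is then immediate from the lower-triangular structure of the propagator: $u_{\zm}$ is the unique $L_2((0,T);H^2)$ solution of the deterministic Burgers equation (e.g.\ by Biler et al., already cited in the proof of Theorem \ref{th:non-exist}), and each subsequent $u_{\ba}$ solves a linear parabolic equation with bounded coefficients and deterministic inhomogeneity, hence is uniquely determined. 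I expect the Catalan bookkeeping and the balancing of the weights $\mfk{q}^{r\ba}$ against $\mfk{q}^{-n\ba}$ to be the most delicate part; everything else is driven by standard parabolic PDE theory applied level by level.
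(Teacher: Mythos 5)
Your overall architecture is exactly the paper's: derive the propagator for the chaos coefficients, solve the zero-chaos (deterministic Burgers) equation by Hopf--Cole, treat every higher level as a linear parabolic equation in the normal triple $(H^2,H^1,L_2)$, and control the resulting quadratic convolution recursion by Catalan numbers. The difference is a bookkeeping choice: the paper expands in the unnormalized basis $\Hepo_{\ba}(\bld{\xi})=\sqrt{\ba!}\,\xi_{\ba}$ so that the Wick product contributes no binomial factors (formula \eqref{WickProd-main} rather than \eqref{WickProd-main1}), and then solves the clean recursion $A_{\ba}=\sum_{\zm<\bbt<\ba}A_{\bbt}A_{\ba-\bbt}$ exactly by a $d$-variable generating function, obtaining $A_{\ba}=C_{|\ba|-1}\binom{|\ba|}{\ba}\prod_iA_{\bld{\epsilon_i}}^{\alpha_i}$ and absorbing the multinomial coefficient via $\binom{|\ba|}{\ba}\le\mfk{q}^{\ba}$ from \eqref{factorial-ineq}.

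There is a genuine gap in your key combinatorial lemma as stated. The induction ansatz $M_{\ba}\le A\,B^{|\ba|}C_{|\ba|}\sqrt{\ba!}\,\mfk{q}^{s\ba}$ does \emph{not} close, because the multi-index convolution is not the scalar Catalan convolution: for fixed $j$ there are up to $\binom{|\ba|}{j}$ multi-indices $\bbt\le\ba$ with $|\bbt|=j$ (exactly $\binom{|\ba|}{j}$ when $\ba$ has all entries $0$ or $1$). Plugging your ansatz into the quadratic sum, the binomial factors from \eqref{WickProd-main1} cancel the $\sqrt{\bbt!(\ba-\bbt)!}$ against $\sqrt{\ba!}$, and you are left with
\begin{equation*}
A^2B^{|\ba|}\sqrt{\ba!}\,\mfk{q}^{s\ba}\sum_{\zm<\bbt<\ba}C_{|\bbt|}\,C_{|\ba|-|\bbt|}
\;\ge\;A^2B^{|\ba|}\sqrt{\ba!}\,\mfk{q}^{s\ba}\sum_{j=1}^{n-1}\binom{n}{j}C_jC_{n-j},
\end{equation*}
with $n=|\ba|$ in the worst case; the right-hand sum grows like $8^n$ up to polynomial factors, not like $C_n\sim4^n$, so the step ``reduces to $C_n=\sum_kC_kC_{n-k-1}$'' is false. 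The repair is to build the multiplicity into the ansatz: prove instead $M_{\ba}\le A\,B^{|\ba|}C_{|\ba|-1}\,\frac{|\ba|!}{\ba!}\sqrt{\ba!}\,\mfk{q}^{s\ba}$ (equivalently, work with the unnormalized coefficients and the factor $\binom{|\ba|}{\ba}$). Then the Vandermonde identity $\sum_{\bbt\le\ba,\,|\bbt|=j}\binom{\ba}{\bbt}=\binom{|\ba|}{j}$ collapses the quadratic sum to the genuine scalar Catalan convolution, and the extra factor $|\ba|!/\ba!\le\mfk{q}^{\ba}$ from \eqref{factorial-ineq} is harmlessly traded for a shift in the weight exponent, after which your convergence and uniqueness arguments go through as written. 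One smaller remark: for $G=S^1$ the paper does not use Hopf--Cole but cites Kiselev et al.\ for $H^2$-regularity of the periodic Burgers equation; if you use Hopf--Cole on the circle you must handle the fact that the antiderivative of $\bE u_0$ is only quasi-periodic.
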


It turns out  that  the generalized expectation $u_{\bld{(0)}}$
of the solution  has special significance. 
In particular, unlike the other  chaos coefficients,
$u_{\bld{(0)}}$ and its partial derivative in $x$ have to be  bounded continuous functions, and the conditions of the theorem ensure that.
This is yet another technical issue which we will discuss later.

\begin{example} {\rm
\label{Ex-main}
Let us apply Theorem \ref{th:mainRS}
 to the Burgers equation with random viscosity
\begin{equation}
\label{eq:MainEx}
u_t+u\diamond u_x=\Big(\big(\mu_0+\dot{W}(t,x)\big)u_x\Big)_x,
\end{equation}
which was mentioned in the introduction as one of the motivations for
investigating \eqref{SB-gen}. To concentrate on the effects of the noise
in the viscosity,
we assume that the initial condition is a non-random smooth function with
compact support.

For the space-time white noise $\dot{W}(t,x)$, we have 
representation
\begin{equation}
\label{eq:spwn-main}
\dot{W}(t,x)=\sum_{k\geq 1} h_k(t,x)\xi_k,
\end{equation}
where $\{h_k,\ k\geq 1\}$ is an orthonormal basis in
$L_2((0,T)\times G)$
(see, for example, Holden et al. \cite[Definition 2.3.9]{HOUZ}).
Then \eqref{eq:MainEx} becomes a particular case of
\eqref{SB-gen} with $a_k=\mu_0+h_k$, $b_k=\partial h_k/\partial x$,
$c_k=g_k=f=0$.

If $G=\bR$, then we take
$$
h_k(t,x)=h_{k_1,k_2}(t,x)=m_{k_1}(t)w_{k_2}(x),\ k_1,k_2\geq 1,
$$
where $m_l$ are normalized sines and cosines, and
$w_l$ are Hermite functions (the standard reference for Hermite
functions is Hille and Phillips \cite[Section 21.3]{Hille-Phil}).
In particular, it is known that $\sup_{x}|w_l(x)|\leq cl^{-1/12}$ and
therefore $\sup_x|w'_l(x)|\leq cl^{5/12}$
(recall that $-w_l''+(1+|x|^2)w_l=2lw_l$).
Then \eqref{bound0} holds with $q_l=2l,\ r=1$.

If $G=S^1$, then we take
$$
h_k(t,x)=h_{k_1,k_2}(t,x)=m_{k_1}(t)m_{k_2}(x),\ k_1,k_2\geq 1,
$$
where $m_l$ are normalized sines and cosines.
In this case $\sup_x|m'_l(x)|\leq cl$, and again
 \eqref{bound0} holds with $q_l=2l,\ r=1$.

As a result, we can apply Theorem \ref{th:mainRS}  with $q_l=2l$ in
both cases, which is the standard choice in the traditional white noise setting.

We can  generalize the model further and introduce, in addition to
 the white noise viscosity, a white noise
random forcing and a white-in-space initial condition, with arbitrary
correlation between the three (by suitably parsing the sequence
$\bld{\xi}$ for the construction of the corresponding random term).

The eigenvalues  of the type $q_l=(2l)^r$, $r\geq 1$ work in most models.
The option to select other $q_l$ can help, for example,
 to deal with exotic stochastic
forcing terms such as  $\sum_k\big(e^{kt}\sin x\big)\xi_k $.

\noindent {\em This concludes Example \ref{Ex-main}.}}
\end{example}

\section{Outline of the proof and further directions}
\label{sec3}

Recall that our objective is to study the equation
\begin{equation}
 \label{SB-gen-d}
 \begin{split}
  u_t(t,x)&+u(t,x)\diamond u_x(t,x)=u_{xx}(t,x)+f(t,x)\\
  &+\sum_{k\geq 1}
  (a_k(t,x)u_{xx}+b_k(t,x)u_x+c_k(t,x)u+g_k(t,x))\diamond \xi_k, \\
  &\ 0<t\leq T,\ x\in G.
 \end{split}
\end{equation}
By definition, the solution is a generalized random element with 
$S$-transform $\str{u}$ satisfying
\begin{equation}
\label{stransburger-d}
 \begin{split}
  \str{u}(t,x;\bld{z})
  &+\int_0^t\str{u}(s,x;\bld{z})\str{u}_x(s,x;\bld{z})ds
  =\str{u}(0,x;\bld{z})+\int_0^t\str{u}_{xx}(s,x;\bld{z})ds\\
  &+\sum_{k\geq 1}
  \int_0^t\Big(a_k(s,x)\str{u}_{xx}+b_k(s,x)\str{u}_x
  +c_k(t,x)\str{u}+g_k(t,x)\Big)z_kds.
 \end{split}
\end{equation}
The most natural approach to proving existence and uniqueness of
solution for \eqref{SB-gen-d}  is to prove existence and
uniqueness of solution for \eqref{stransburger-d}.
This approach is used, for example, by Benth et al. \cite{Pot_Nl}.
Here are some of the reasons why we choose a different approach:
\begin{itemize}
\item The complex-valued Burgers equation \eqref{stransburger-d}
is not easily handled by the Hilbert-space methods
(a major reason is that, for complex numbers, $z^2\not=|z|^2$),
whereas our goal is to get solvability in the Sobolev spaces $H^{\gamma}$.
\item The \Holder spaces could be a natural choice for \eqref{stransburger-d},
but the available  methods lead either to existence that is local in time
or require additional assumptions about the initial conditions
 (in the form of smallness of certain norms). We would like to avoid either
 of these restrictions.
 \item The problem of uniqueness of solution for \eqref{stransburger-d} is
 non-trivial.
 \item Even if we succeed in finding $\str{u}$ and proving that it is unique,
  there is still no easy way to
 get any regularity information about the solution, such as
 estimate  \eqref{bound-R}.
 \end{itemize}
 Our approach is to get information about the solution from
 the chaos expansion. In what follows, we will use the notation
\begin{equation}
\label{NewBasis}
\Hepo_{\ba}(\bld{\xi})=\sqrt{\ba!} \,\xi_{\ba}
\end{equation}
and an equivalent chaos expansion
\begin{equation}
\label{NewBasis1}
u=\sum_{\ba\in \cJ} u_{\ba} \Hepo_{\ba}(\bld{\xi}).
\end{equation}
By \eqref{WickProd-main}, representation \eqref{NewBasis1}
is more convenient for the computation of the Wick product.

Our first step is to derive equations for the coefficients $u_{\ba}$.
As a bonus, we also get an alternative way to prove uniqueness of solution.

By \eqref{coef-main} (and keeping track of the factorial terms), we get
 the following relation between the
chaos coefficients $u_{\ba}$ from \eqref{NewBasis1} and
the $S$-transform $\str{u}$:
\begin{eqnarray}
\label{chaoscoeff}
\str{u}(t,x,\bld{z})=\sum_{\ba\in \cJ} u_{\ba}(t,x)
{\bld{z}^{\ba}}.
\end{eqnarray}
Before we proceed, let us introduce several notations.
 To reduce the number of subscripts, it is convenient to
 re-name the initial condition:  $u_0(x)=\varphi(x)$, and to use alternative
 notations for derivatives: $\dot{u}=u_t$, and $Du=u_x$.
 With the generalized expectation $u_{\bld{(0)}}$ having special significance
 (which also means frequent appearance in complicated formulas),
 we re-name it as well: $u_{\bld{(0)}}=\mfk{u}$. Now we are ready to derive the
 equations for $u_{\ba}$.

\begin{theorem}
\label{mainlemma}
The function $\str{u}$ is a (strong) solution of \eqref{stransburger-d}
if and only if the chaos coefficients $\{u_{\ba},\, \ba\in\cJ\}$
satisfy (also in the strong sense)
 the system of equations
\begin{align}
 \label{zero-p}
 \dot{\mfk{u}}&+\mfk{u}D\mfk{u}=D^2\mfk{u},\ \mfk{u}(0,x)=\varphi_{\zm}(x);\\
 \label{km-order}
 \dot{u}_{\km}&=D\big(Du_{\km}-\mfk{u}u_{\km}\big)
 +a_kD^2\mfk{u}+b_kD\mfk{u}+
 c_k\mfk{u}+g_k,\\
 \notag  u_{\km}(0,x)&=\varphi_{\km}(x);\\
 \label{general-p}
 \dot{u}_{\ba}&=D\big(Du_{\ba}-\mfk{u}u_{\ba}\big)-\sum_{\zm<\bbt<\ba}
 u_{\bbt}Du_{\ba-\bbt}\\
 \nonumber
 &+\sum_{k: \alpha_k>0}
  \big(a_kD^2u_{\ba-\km}+b_kDu_{\ba-\km}+
 c_ku_{\ba-\km}\big),\\
 \notag u_{\ba}(0,x)&=\varphi_{\ba}(x),
 \ |\ba|>1,
\end{align}
and there exists a $p>0$ such that, for all $\ba\in \cJ$,
\begin{equation}
\label{bound-sol-al}
\|u_{\ba}\|^2_{L_2((0,T);H^2(G))}\leq \mfk{q}^{p\ba}.
\end{equation}
\end{theorem}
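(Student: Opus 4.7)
The plan is to exploit the uniqueness of Taylor expansions: substituting the power series $\str{u}(t,x;\bld{z})=\sum_{\ba\in\cJ}u_{\ba}(t,x)\bld{z}^{\ba}$ (which by \eqref{chaoscoeff} is the Taylor expansion of $\str{u}$ at the origin) into \eqref{stransburger-d} and matching coefficients of $\bld{z}^{\ba}$ on both sides yields exactly the propagator \eqref{zero-p}--\eqref{general-p}, while the growth bound \eqref{bound-sol-al} is the analytic translation of the condition $u\in(\cS)_{-1}$.

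For the forward implication, I would start from the assumption that $\str{u}$ is a strong solution of \eqref{stransburger-d}. The hypothesis $u\in(\cS)_{-1,-\ell}(L_2((0,T);H^2(G)))$ for some $\ell>0$ is, after the change of basis from $\xi_{\ba}$ to $\Hepo_{\ba}$, equivalent to $\sum_{\ba}\mfk{q}^{-\ell\ba}\|u_{\ba}\|_{L_2((0,T);H^2)}^{2}<\infty$, which in turn gives \eqref{bound-sol-al} for $p$ slightly larger than $\ell$. Plugging the Taylor series into the integrated form of \eqref{stransburger-d}, using the Cauchy product
$$
\str{u}\,\str{u}_x=\sum_{\ba\in\cJ}\Big(\sum_{\bbt\leq\ba}u_{\bbt}\,Du_{\ba-\bbt}\Big)\bld{z}^{\ba}
$$
together with the identity $z_k\str{u}=\sum_{\ba:\alpha_k>0}u_{\ba-\km}\bld{z}^{\ba}$, I would match coefficients of $\bld{z}^{\ba}$ order by order. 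Order $\zm$ reproduces \eqref{zero-p}; order $\km$ reproduces \eqref{km-order} once the boundary Cauchy terms $\mfk{u}\,Du_{\km}+u_{\km}\,D\mfk{u}$ are combined into the divergence form $D(\mfk{u}u_{\km})$; and for $|\ba|\geq 2$ the interior Cauchy terms appear explicitly as the $\sum_{\zm<\bbt<\ba}$ sum in \eqref{general-p}, with the boundary terms again collapsing into $D(Du_{\ba}-\mfk{u}u_{\ba})$.

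For the converse, given $\{u_{\ba}\}$ solving the propagator and satisfying \eqref{bound-sol-al}, I would define $\str{u}(t,x;\bld{z}):=\sum_{\ba}u_{\ba}(t,x)\bld{z}^{\ba}$. A weighted Cauchy--Schwarz estimate of the form
$$
\sum_{\ba}\mfk{q}^{p\ba/2}|\bld{z}^{\ba}|\leq\Big(\sum_{\ba}\mfk{q}^{-\gamma\ba}\Big)^{1/2}\Big(\sum_{\ba}\mfk{q}^{(p+\gamma)\ba}|\bld{z}^{\ba}|^{2}\Big)^{1/2},
$$
together with the summability \eqref{usual-sum} from Proposition \ref{prop:aux}, forces absolute convergence of the series in $L_2((0,T);H^2(G))$ on $\mathbb{K}_{p+\gamma}(R)$ for suitable $\gamma,R>0$, hence analyticity there; Theorem \ref{characterization}(b) then produces a unique $u\in(\cS)_{-1}(L_2((0,T);H^2(G)))$ whose $S$-transform is $\str{u}$. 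Multiplying each propagator equation by $\bld{z}^{\ba}$ and summing over $\ba\in\cJ$ reassembles \eqref{stransburger-d}, with the Cauchy double sum collapsing back to $\str{u}\,\str{u}_x$ exactly as above.

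The main obstacle is rigorously justifying the interchange of summation with spatial derivatives, time integration, and especially the Cauchy product, all in the $L_2$-sense rather than pointwise. This should rest on dominating tail sums uniformly on a slightly smaller subneighborhood of $\bld{z}=0$ via the weighted Cauchy--Schwarz argument above; the Sobolev embedding $H^2(G)\hookrightarrow\cC^1(G)$ ensures that $\str{u}\,\str{u}_x$ makes sense in $L_2(G)$ for each fixed $\bld{z}$ in the relevant neighborhood, closing the argument.
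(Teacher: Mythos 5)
Your proposal is correct and follows essentially the same route as the paper: the paper's own proof consists precisely of substituting the expansion \eqref{chaoscoeff} into \eqref{stransburger-d}, matching coefficients of $\bld{z}^{\ba}$ (with the Cauchy product of $\str{u}$ and $\str{u}_x$ producing the convolution sum and the boundary terms $\mfk{u}Du_{\ba}+u_{\ba}D\mfk{u}=D(\mfk{u}u_{\ba})$), and noting that \eqref{bound-sol-al} is equivalent to $u\in(\cS)_{-1}$ and guarantees uniform convergence in a neighborhood of $\bld{z}=0$ so that all manipulations are legitimate. Your write-up merely supplies the convergence details (weighted Cauchy--Schwarz with \eqref{usual-sum}) that the paper leaves implicit.
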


\begin{proof}
Note that \eqref{bound-sol-al} is equivalent to the condition
$u\in (\cS)_{-1}(L_2((0,T);H^2(G)))$.

Assume that
$\str{u}$ satisfies \eqref{stransburger-d} and is an analytic function of
$\bld{z}$ so that the series \eqref{chaoscoeff} converges
uniformly in $(t,x,\bld{z})$ in some neighborhood of $\bld{z}=0$.
We then substitute \eqref{chaoscoeff} into \eqref{stransburger-d},
compare the coefficients of  $\bld{z}^{\ba}$ for each $\ba$, and
get  \eqref{zero-p}--\eqref{general-p}. Uniform convergence
ensures that all manipulations are legitimate.

For the proof in the opposite direction, we reverse the above argument.
If \eqref{bound-sol-al}
holds, then the series \eqref{chaoscoeff} converges
uniformly in $(t,x,\bld{z})$ in some neighborhood of $\bld{z}=0$.
We then combine equalities
\eqref{zero-p}--\eqref{general-p} into  power series and
get \eqref{stransburger-d}.

\noindent{\em This concludes the proof of Theorem \ref{mainlemma}.}
\end{proof}

The  system of PDEs \eqref{zero-p}--\eqref{general-p}
is called the {\tt propagator} for
 equation (\ref{SB-gen-d}) and describes how the
stochastic equation propagates chaos through difference stochastic scales.
 Note that this
propagation of chaos has no connection with the similar term used for
deterministic equations in
the study of particle systems (e.g. Sznitman \cite{Sznitman}).

Before we proceed, let us note that:
\begin{enumerate}
\item Since $\lim_{k\to \infty} q_k=+\infty$, an upper bound
$\mfk{q}^{p\ba}$ is equivalent to
$C^{|\ba|}\mfk{q}^{p\ba}$: in \eqref{bound-sol-al}, we can always
take a larger $p$, or, in the original construction of the chaos space,  we can switch from $q_k$ to $Cq_k$.
\item The propagator is a lower triangular system and can be solved by
induction on $|\ba|$. Only the first equation, describing
$\mfk{u}=u_{\bld{(0)}}$ is non-linear: it is a deterministic Burgers equation.
All other equations are linear, but with variable coefficients in the lower-order
derivatives; these coefficients are determined by $\mfk{u}$.
\item By Theorem \ref{mainlemma}, uniqueness of solution for the propagator implies uniqueness of solution of (\ref{SB-gen-d}) in
the sense of Definition \ref{def-solution}.
\end{enumerate}

 To prove Theorem \ref{th:mainRS},  it is enough to
 show that the propagator has a unique solution and
 \begin{equation}
 \label{Coef-Estim-Main}
  \|u_{\ba}\|^2_{L_2((0,T);H^2(G))}+
  \sup_{0\leq t \leq T} \|u_{\ba}(t,\cdot)\|^2_{H^1(G)}\leq C^{|\ba|}\mfk{q}^{p\ba},\ p>0;
 \end{equation}
this is actually stronger than \eqref{bound-R}.

  A key step in the analysis of the propagator is the study of
  the Burgers equation \eqref{zero-p}.
The solution of \eqref{zero-p} must be 
\begin{itemize} 
\item a continuously-differentiable in $x$ function,
to act as a variable coefficient in the subsequent propagator equations, and also 
\item an element of  $L_2((0,T);H^2(G))$, to produce a sufficiently
regular free term in \eqref{km-order}.
\end{itemize} 
On the real line, the necessary solvability results
 can be obtained using the Hopf-Cole
transformation, with sufficiently many regularity assumptions on $\varphi$ and
$f$. On the circle, we have to go with the best result we could find:
Kiselev et al. \cite[Theorem 1.1]{fractal-periodic}, who consider $f=0$. 
 
 We will now present the precise results about solvability of the deterministic 
 Burgers equation. 

Consider the equation
\begin{eqnarray}
 \label{det-burg-eqn}
 U_t(t,x)+U(t,x)U_x(t,x)=U_{xx}(t,x)+F_x(t,x)
\end{eqnarray}
with the initial condition $U(0,x)=U_0(x)$, for $x\in G$ where
$G= \bR$ (real line) or
$G= S^1$ (unit circle or periodic boundary conditions).
To define the solution of the equation, we assume that $U_0$ is a
continuous function and $F\in L_2((0,T)\times G))$.

A {\em weak} solution of \eqref{det-burg-eqn} is a continuous
in $t,x$ function $U$ such that, for all smooth functions $\psi$ with
compact support in $G$, the equality
\begin{equation}
\label{DerBerg-WeakSol}
\begin{split}
\int_GU(t,x)\psi(x)dx&-\frac{1}{2} \int_0^t\int_G U^2(s,x)\psi'(x)dxdt=
\int_G U_0(x)\psi(x)dx\\
&+\int_0^t\int_G U(s,x)\psi''(x)dxdt-\int_0^t\int_G F(s,x)\psi'(x)dxds
\end{split}
\end{equation}
holds for all $t\in [0,T]$.

\begin{theorem}[Solvability of the deterministic Burgers equation]
 \label{det_burgers}
$\ \ {  } \ \ $ \\

 (a) If $U_0\in \cC^1(\bR)\bigcap L_1(\bR)$, $U'_0$ is
 \Holder continuous (any non-zero order will work),  and
 $F$, $F_x$,  are bounded and \Holder continuous in $(t,x)$
 functions, then (\ref{det-burg-eqn}) has
   a weak solution $U=U(t,x)$  such that $U$ is bounded and
  has a continuous bounded
 derivative in $x$. The solution is unique in the class of
 bounded continuous functions.

 (b) If $U_0\in \cC^1(\bR)\bigcap
 H^1(\bR)\bigcap L_1(\bR)$, $U'_0$ is
 \Holder continuous (any non-zero order will work), and 
 $F$ and $F_x$ are
  bounded and \Holder continuous in $(t,x)$ functions,
   then  (\ref{det-burg-eqn}) has
   a weak solution $U=U(t,x)$ such that
 \begin{equation}
 \label{DetBurg-SobReg}
 U\in L_2\big((0,T);H^2(\bR)\big)\bigcap \cC((0,T);H^1(\bR)).
 \end{equation}
  The solution is unique in the
 class of bounded continuous functions.

 (c) If $U_0\in H^2(S^1)$ and $F=0$, then (\ref{det-burg-eqn}) has a weak solution  $U=U(t,x)$   such that $U\in\mathcal{C}\big((0,T);H^2(S^1)\big)$
 (by the Sobolev embedding, this implies continuous differentiability in $x$).
 The solution is unique in the class of bounded continuous function.
\end{theorem}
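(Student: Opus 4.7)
My plan is to handle parts (a) and (b) via the Hopf--Cole transformation, invoke the cited result of Kiselev et al.\ for part (c), and handle uniqueness in all three cases by a common linearization argument.

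For parts (a) and (b), I would pass to the potential. The integrability hypothesis $U_0\in L_1(\mathbb{R})$ makes $V_0(x):=\int_{-\infty}^{x} U_0(y)\,dy$ bounded, while H\"{o}lder continuity of $U_0'$ upgrades $V_0$ to $\mathcal{C}^{2,\gamma}$. Set $W_0=\exp(-V_0/2)$, which is bounded above and below by positive constants. I would then solve the linear parabolic equation
\begin{equation*}
W_t = W_{xx} - \tfrac{1}{2} F\, W, \qquad W(0,\cdot)=W_0,
\end{equation*}
which has a unique bounded classical solution by standard Schauder theory (since $F$ is bounded and H\"{o}lder continuous in $(t,x)$); the Feynman--Kac representation $W(t,x)=\mathbb{E}\bigl[W_0(x+\sqrt{2}B_t)\exp(-\tfrac{1}{2}\int_0^t F(t-s,x+\sqrt{2}B_s)\,ds)\bigr]$ makes explicit that $c\leq W\leq C$ for some $c>0$. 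Defining $V=-2\log W$ and $U:=V_x=-2 W_x/W$, a direct computation confirms that $V$ satisfies $V_t+\tfrac{1}{2}V_x^2=V_{xx}+F$, whence $U$ is a weak solution of \eqref{det-burg-eqn} with the bounded continuity of $U$ and $U_x$ required in part (a).

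For the additional Sobolev regularity in part (b), I would run energy estimates directly on \eqref{det-burg-eqn}. Testing against $U$ kills the nonlinear term via $\int U^2 U_x\,dx=0$ (legitimate because $U_0\in L_1$ and heat smoothing preserve decay) and, after Gr\"{o}nwall, yields $U\in L_\infty((0,T);L_2)\cap L_2((0,T);H^1)$. Testing against $-U_{xx}$ produces the cubic term $\int U U_x U_{xx}\,dx$, which I bound by $\|U\|_\infty\|U_x\|_{L_2}\|U_{xx}\|_{L_2}$ and absorb into $\tfrac{1}{2}\|U_{xx}\|_{L_2}^2$ using the a priori $L_\infty$ bound already produced by the Hopf--Cole step; combined with $U_0\in H^1$ and the $L_2$-plus-bounded-H\"{o}lder assumptions on $F$ and $F_x$, this delivers the claimed $L_2((0,T);H^2)\cap\mathcal{C}((0,T);H^1)$ regularity. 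Part (c) is essentially the cited Kiselev et al.\ theorem, whose $H^2$-level existence I would invoke as a black box.

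Uniqueness in the class of bounded continuous solutions, common to all three parts, follows by linearization. If $U_1,U_2$ are two such solutions with the same initial datum, the difference $w=U_1-U_2$ satisfies
\begin{equation*}
w_t + U_1 w_x + (U_2)_x\, w = w_{xx}, \qquad w(0,\cdot)=0,
\end{equation*}
a linear parabolic equation whose coefficients $U_1$ and $(U_2)_x$ are bounded (by the Hopf--Cole construction on $\mathbb{R}$ and by Sobolev embedding on $S^1$); a standard energy estimate then forces $w\equiv 0$. The main obstacle, in my view, is part (b): the Hopf--Cole construction supplies pointwise regularity but no Sobolev bounds, so the energy closure for the $H^2$ estimate genuinely depends on re-using the $L_\infty$ output of Hopf--Cole, and justifying vanishing boundary terms at spatial infinity has to be threaded carefully through both the potential construction and the energy inequalities.
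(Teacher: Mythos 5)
Your parts (a) and (c) coincide with the paper's proof (Hopf--Cole plus the Feynman--Kac two-sided bound on $V$ for (a); the citation of Kiselev et al.\ \cite{fractal-periodic} for (c)). Part (b) is where you take a genuinely different route. The paper never runs energy estimates on the nonlinear equation: it observes that $V$ itself lies in no Sobolev space (its initial datum $\exp(-\tfrac12\int_{-\infty}^xU_0)$ is not integrable), so it differentiates the heat equation instead. The function $\bar V=V_x$ satisfies the \emph{linear} equation $\bar V_t=\bar V_{xx}-\tfrac12 F\bar V-\tfrac12 VF_x$ with $\bar V(0,\cdot)=-\tfrac12U_0V_0\in H^1(\bR)$, and the linear parabolic regularity theorem (Theorem \ref{exis-para}) in the normal triple $(H^2,H^1,L_2)$ gives $V_x\in L_2((0,T);H^2)\cap\cC((0,T);H^1)$; since $1/V$ is bounded with two bounded continuous $x$-derivatives (from part (a)), $U=-2V_x/V$ inherits \eqref{DetBurg-SobReg}. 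This sidesteps exactly the two difficulties you flag at the end --- closing the cubic term in the $H^2$ energy estimate and justifying the boundary terms at infinity --- because the equation being estimated is linear and its data are already in the right spaces. Your route is viable, but it needs an extra approximation (Galerkin or mollification) layer to legitimize testing against $-U_{xx}$ for a solution whose membership in $H^2$ is precisely what is being proved; the paper's version buys rigor at no extra cost. (Both you and the paper implicitly need $F_x\in L_2((0,T)\times\bR)$, which is not literally among the stated hypotheses but holds in the only application, where $f=F_x$ is assumed square integrable.)

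The one genuine gap is in your uniqueness argument. You linearize as $w_t+U_1w_x+(U_2)_xw=w_{xx}$ and invoke a ``standard energy estimate,'' but the uniqueness class is \emph{bounded continuous} weak solutions in the sense of \eqref{DerBerg-WeakSol}: a competitor $U_2$ in that class need not be differentiable in $x$ (so $(U_2)_x$ may not exist as a bounded function), and on $G=\bR$ the difference $w$ is merely bounded, not in $L_2$, so the energy identity cannot even be written down. Only one of the two solutions is the Hopf--Cole one; you cannot assume regularity of both. The paper avoids this by transporting uniqueness back through the Hopf--Cole map to the classical bounded-class uniqueness for the linear heat equation \eqref{eq:Heat-multipl} (via \cite[Theorem IV.5.1]{LSU}). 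If you wish to keep a direct PDE argument, you should rewrite the difference equation in divergence form, $w_t=w_{xx}-\tfrac12\bigl((U_1+U_2)w\bigr)_x$, which requires only boundedness of $U_1+U_2$, and conclude by duality against smooth, compactly supported solutions of the backward adjoint problem rather than by an $L_2$ energy estimate on $w$ itself.
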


\begin{proof}
For the case $G=\bR$, existence, uniqueness, and regularity of the solution
are established using the Hopf-Cole transformation: we have
\begin{equation}
\label{HCF}
U=-2V_x/V,
\end{equation}
 where $V$ is the solution of the heat equation
\begin{equation}
\label{eq:Heat-multipl}
V_t=V_{xx}-\frac{1}{2}F(t,x)\, V,\ \ V(0,x)=\exp\Big(-\frac{1}{2}\int_{-\infty}^x U_0(y)dy\Big).
\end{equation}
Thus, the study of equation \eqref{DerBerg-WeakSol} is reduced to
the study of equation \eqref{eq:Heat-multipl}.

 (a) A result from Ladyzhensakaya et al. \cite[Theorem IV.5.1]{LSU}
 implies that, under our assumptions, equation \eqref{eq:Heat-multipl} has
a classical solution and the solution is unique in the class of bounded
 continuous functions. Then a result from
 Rozovskii \cite[Theorem 5.1.1]{stoch-evol-sys} provides the
 following probabilistic representation of the solution, with $w=w(t)$ denoting a standard Brownian motion:
\begin{equation}
\label{eq:Heat-multip1}
V(t,x)=\bE\Big( V_0\big(x+\sqrt{2}w(t)\big)
\exp\big(-\frac{1}{2}\int_0^t F(s,x+\sqrt{2}w(t)-\sqrt{2}w(s))ds\big)\Big);
\end{equation}
Since $F$ is bounded and
$$
e^{-(1/2)\|U_0\|_{L_1(\bR)}}\leq V(x)
\leq e^{(1/2)\|U_0\|_{L_1(\bR)}},
$$
it follows from \eqref{eq:Heat-multip1} that there exist   numbers
$0<p_1<p_2$ such that, for all $t,x$, $p_1\leq V(t,x)\leq p_2$. Therefore
 the functions $U=-2V_x/V$ and $U_x=-2(V_{xx}V-V_x^2)/V^2$
are bounded and continuous. Uniqueness of $U$ follows from the uniqueness of
$V$. The reader can also note that considering the Burgers equation in the form
$U_t+UU_x=U_{xx}/2$ would eliminate a lot of various powers of $2$ in
subsequent computations. \\

(b) Under conditions of this part, equation \eqref{eq:Heat-multipl}
still has a classical solution, but the standard parabolic regularity theorem
in Sobolev spaces cannot be applied to $V$ because $V_0$ is not
integrable, and so $V$ does not belong to any Sobolev space.
On the other hand, \eqref{HCF} implies that \eqref{DetBurg-SobReg} will
follow if we can show that $1/V$ is bounded and has two
continuous and bounded derivatives in $x$ (which we did in part (a)), and that 
\begin{equation}
\label{Heat-SobSpReg}
 V_x\in L_2\big((0,T);H^2(\bR)\big)\bigcap \cC((0,T);H^1(\bR)),
 \end{equation}
 which we show next. 
 
 Denoting $V_x$ by $\bar{V}$, we find from \eqref{eq:Heat-multipl}
 that
 $$
 \bar{V}_t=\bar{V}_{xx}-\frac{1}{2}F\,\bar{V}-\frac{1}{2}VF_x,
 $$
 and $\bar{V}(0,x)=-(1/2)U_0(x)V_0(x)$.
 Assumptions of the theorem and the properties of $V$ imply that
 $\bar{V}(0,\cdot)\in H^1$ and $VF_x\in L_2((0,T)\times \bR)$.
 Then the linear parabolic regularity theorem (see Theorem  \ref{exis-para}
 below) in the normal
 triple $(H^2(\bR), H^1(\bR),L_2(\bR))$ implies \eqref{Heat-SobSpReg}.

 By the Sobolev embedding, the  conditions
 $U_0\in \cC^1(\bR)\bigcap  H^1(\bR)$ plus \Holder continuity of
 $U'_0$ can be replaced with a stronger but shorter condition
 $U_0\in H^2(\bR)$, and this is what happens in the statement of
 Theorem \ref{th:mainRS}. In general,  we do have to add the
 condition $U_0\in L_1(\bR)$ (for example, the function $(1+|x|^2)^{-1/2}$
 is in every $H^{\gamma}(\bR)$, but is not in $L_1(\bR)$.) Finally,
 note that the resulting weak solution $U$, being an element of
 $L_2((0,T);H^2(\bR))$, is in fact strong. \\

(c) See  Kiselev et al. \cite[Theorem 1.1]{fractal-periodic}.\\

 \noindent{\em This completes the proof of Theorem \ref{det_burgers}.}
\end{proof}

We were unable to find satisfactory solvability results
for  the circle when an inhomogeneous term $f(t,x)$ is present,
or  for the Dirichlet or Neumann boundary value problems.
As soon as these results are available,  the corresponding analogue
of  Theorem \ref{th:mainRS} will follow immediately
from Theorem \ref{mainlemma}. In fact, one can go even further and
investigate Wick-stochastic versions of other one-dimensional
equations with quadratic
nonlinearity, such as Camassa-Holm, KdV, KPP, and Kuramoto-Sivashinskii
equations,
{\em provided there is a satisfactory solvability result for the
corresponding deterministic nonlinear equation}.

Extension to higher dimensions presents an additional  challenge, as the
Sobolev space $H^1(\bRd)$, $d\geq 2$,  is no longer embedded into the space of
continuous functions. Nevertheless, Mikulevicius and Rozovskii \cite{MR-WNSE} recently studied the Wick-stochastic versions of the Navier-Stokes equations
using $L_p$ solvability results with $p>d$. Extension of our results
 to a $d$-dimensional
system of Burgers equations seems feasible, especially on $\bRd$, where
the Hopf-Cole transformation reduces the system to a scalar heat equation.

Similar to \cite{MR-WNSE}, one can study other properties of the
chaos solution of (particular cases of) \eqref{SB-gen-d},
such as adaptedness and Markov property.
For a number of reasons, these questions fell outside the scope of
the current paper (in fact, to simplify presentation, we do not even
consider a filtration  and introduce random perturbation simply as a countable
set of iid standard Gaussian random variables).
  \\

We now continue with an outline of the proof of Theorem \ref{th:mainRS}.
 The difference between $G=\bR$ and $G=S^1$ stops
with Theorem \ref{det_burgers}.
After that, analysis of the propagator is all about linear parabolic equations
of the form
$$
u_t=\mathcal{A}u+F,
$$
where $\mathcal{A}$ is a linear partial differential operator. The analysis relies on
 the following result.

\begin{theorem}[Linear Parabolic Regularity]
 \label{exis-para}
 
 Let $(V,H,V')$ be a normal triple of separable Hilbert spaces and
 denote by $[w,v]$, $w\in V',\ v\in V$, the duality between
 $V$ and $V'$ relative to the inner product in $H$.
 Consider
   a collection of bounded linear operators $\mathcal{A}(t)$, $0\leq t\leq T,$
   from  $V$ to $V'$ with the following properties:
   \begin{itemize}
   \item For all $x,y\in V$, the function $t\mapsto [\mathcal{A}(t)x,y]$ is measurable;
       \item There exist $\varepsilon>0$ and $C_0\in \bR$
    such that, for all $t\in [0,T]$ and $v\in V$,
 \begin{eqnarray*}
  [ \mathcal{A}(t)v, v ] + \varepsilon \|v\|_V^2 \leq C_0\|v\|^2_H.
 \end{eqnarray*}
 \end{itemize}

  \noindent Then, for every  $u_0\in H$ and $F\in L_2((0,T);V')$,
  there exists a unique  $u\in L_2((0,T);V)$ such that
  the equality
  $$
  u(t)=u_0+\int_0^t \mathcal{A}(s)u(s)ds+\int_0^t F(s)ds
  $$
  holds in $L_2((0,T);V')$. Moreover, $u\in \cC((0,T);H)$ and
  $$
  \int_0^T \|u(t)\|_V^2dt+\sup_{0<t<T} \|u(t)\|_H^2 \leq
  C(\varepsilon, C_0,T)\big(\|u_0\|_H^2+\int_0^T \|F(s)\|_{V'}^2ds\big).
  $$
\end{theorem}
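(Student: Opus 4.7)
The plan is to apply the classical Galerkin approximation combined with Lions' energy method, which is the standard route to results of this kind. Since $V$ is separable and densely embedded in $H$, I would first choose a linearly independent sequence $\{e_i\}_{i\geq 1}\subset V$ whose finite-dimensional spans $V_n=\mathrm{span}\{e_1,\ldots,e_n\}$ are dense in both $V$ and $H$. For each $n$, seek $u_n(t)=\sum_{i=1}^n c_i^{(n)}(t)e_i$ satisfying the projected identity
\[
 [u_n'(t),e_j]=[\mathcal{A}(t)u_n(t),e_j]+[F(t),e_j],\qquad j=1,\ldots,n,
\]
with $u_n(0)$ equal to the $H$-projection of $u_0$ onto $V_n$. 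The measurability hypothesis on $t\mapsto[\mathcal{A}(t)x,y]$ makes this a linear ODE system in the coefficients $c_i^{(n)}$ with measurable and locally integrable entries, so it has a unique solution on $[0,T]$ by Carath\'eodory theory.

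Next I would derive the uniform energy estimate by testing against $u_n$ itself. The finite-dimensional chain rule gives $\frac{d}{dt}\|u_n\|_H^2 = 2[\mathcal{A}(t)u_n,u_n]+2[F(t),u_n]$. Coercivity yields $2[\mathcal{A}(t)u_n,u_n]\leq -2\varepsilon\|u_n\|_V^2+2C_0\|u_n\|_H^2$, and Young's inequality bounds $2[F,u_n]\leq \varepsilon\|u_n\|_V^2+\varepsilon^{-1}\|F\|_{V'}^2$. Integrating in $t$ and applying Gronwall's lemma then produces
\[
 \sup_{0\leq t\leq T}\|u_n(t)\|_H^2+\int_0^T\|u_n(s)\|_V^2\,ds\leq C(\varepsilon,C_0,T)\Big(\|u_0\|_H^2+\int_0^T\|F(s)\|_{V'}^2\,ds\Big),
\]
uniformly in $n$. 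These bounds, together with the identity $u_n'=P_n(\mathcal{A}u_n+F)$, also give a uniform $L_2((0,T);V')$ bound on $u_n'$.

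Extract a subsequence (still denoted $u_n$) converging weakly in $L_2((0,T);V)$, weakly-$\ast$ in $L_\infty((0,T);H)$, and with $u_n'$ converging weakly in $L_2((0,T);V')$ to some limit $u$. Multiplying the Galerkin identity by a smooth test function $\phi(t)$ with $\phi(T)=0$, integrating, and passing to the limit verifies that $u$ satisfies the integral equation tested against $v\in V_m$ for every $m$; density of $\bigcup_m V_m$ in $V$ promotes this to the required identity in $L_2((0,T);V')$. The candidate $u$ therefore solves the equation.

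For the continuity $u\in\cC((0,T);H)$ and the stated a priori bound, I would invoke the Lions--Magenes embedding theorem: if $u\in L_2((0,T);V)$ and $u'\in L_2((0,T);V')$, then $u$ has a representative in $\cC([0,T];H)$ and the chain rule $\frac{d}{dt}\|u(t)\|_H^2=2[u'(t),u(t)]$ holds in the sense of distributions on $(0,T)$. Running the same coercivity plus Gronwall computation on $u$ itself (rather than on $u_n$) then reproduces the claimed estimate with the correct initial value $u(0)=u_0$. Uniqueness is immediate from the same identity applied to the difference $w=u^1-u^2$ of two solutions: $w$ solves the homogeneous equation with $w(0)=0$, and $\frac{d}{dt}\|w\|_H^2\leq 2C_0\|w\|_H^2$ forces $w\equiv 0$. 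The main obstacle is the rigorous justification of the Lions--Magenes integration-by-parts formula, since a priori the pairing $[u',u]$ is only a distribution in $t$; everything else reduces to testing against dense families and routine Gronwall estimates, and the measurability hypothesis on $\mathcal{A}$ is precisely what makes the Galerkin ODEs well-posed and the weak limit passage legitimate.
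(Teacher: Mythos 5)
Your proposal is correct, and it is essentially \emph{the} standard proof of this result. Note that the paper itself offers no argument here: its ``proof'' is a one-line citation to Rozovskii \cite[Theorem 3.1.2]{stoch-evol-sys}, and the Galerkin--Lions energy method you describe (Carath\'eodory solvability of the projected ODE system, the coercivity--Young--Gronwall estimate, weak compactness, and the Lions--Magenes embedding of $\{u\in L_2((0,T);V):u'\in L_2((0,T);V')\}$ into $\cC([0,T];H)$ together with the identity $\frac{d}{dt}\|u\|_H^2=2[u'(t),u(t)]$) is precisely what that reference carries out. Two minor remarks. First, your derivation of the uniform $L_2((0,T);V')$ bound on $u_n'$ implicitly uses that $\sup_t\|\mathcal{A}(t)\|_{V\to V'}<\infty$ and that the projections $P_n$ are uniformly bounded from $V'$ to $V'$; the former is a hypothesis the theorem statement leaves tacit (it is explicit in Rozovskii), and the latter is a known nuisance with $H$-orthogonal projections. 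For a \emph{linear} equation you can sidestep the $u_n'$ bound entirely: weak convergence of $u_n$ in $L_2((0,T);V)$ already suffices to pass to the limit in the weak formulation tested against $\phi(t)e_j$, and the regularity $u'\in L_2((0,T);V')$ is then read off from the limit equation itself. Second, the recovery of $u(0)=u_0$ should be done at the level of the time-integrated weak formulation with $\phi(T)=0$, $\phi(0)\neq 0$, using $u_n(0)=P_nu_0\to u_0$ in $H$; you gesture at this but it deserves a line. Neither point is a gap in substance.
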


\begin{proof} See, for example, Rozovskii \cite[Theorem 3.1.2]{stoch-evol-sys}.
\end{proof}

To study the propagator equations \eqref{general-p}, we apply Theorem
\ref{exis-para} with
\begin{itemize}
\item $V=H^2(G)$, $H=H^1(G)$, $V'=L_2(G)$;
\item $\mathcal{A}=\partial^2/\partial x^2-\mfk{u}(t,x)\partial/\partial x-\mfk{u}_x(t,x)$,
where $\mfk{u}$ is the solution of \eqref{zero-p};
\item A complicated but manageable free term
\begin{equation}
\label{propag-RHS}
\begin{split}
F=F_{\ba}&=-\sum_{\zm<\bbt<\ba}
 u_{\bbt}Du_{\ba-\bbt}+\sum_{k: \alpha_k>0}
 \big(a_kD^2u_{\ba-\km}+b_kDu_{\ba-\km}\\+
 &c_ku_{\ba-\km}\big).
\end{split}
\end{equation}
\end{itemize}
By Theorem \ref{det_burgers}, the functions
$\mfk{u}$ and $\mfk{u}_x$ are continuous and bounded, so that
 the operator $\mathcal{A}$ is indeed
bounded from $H^2(G)$ to $L_2(G)$. The terms $u_{\bbt}$
have $\bbt<\ba$ and come from the earlier propagator equations.
In particular, by Theorem \ref{exis-para} and the Sobolev embedding,
 $u_{\bbt}\in L^2((0,T);H^2(G))\bigcap \cC((0,T)\times G)$,
 which means that expressions $u_{\bbt}Du_{\ba-\bbt}$ and
 $a_kD^2u_{\ba-\km}$ are square-integrable, and therefore
 $F\in L_2((0,T)\times G)$.

 The main technical complication is that the number of terms
 on the right-hand side of \eqref{propag-RHS}  grows with
 $|\ba|$, and, to satisfy \eqref{Coef-Estim-Main}, we have to control this
 growth. In the following section, we
  derive a recursive relation between the $L_2$ norms
 of $F_{\ba}$ for different $\ba$ and show that the rate of growth in $|\ba|$
 is controlled by the {\em Catalan numbers} $C_{|\ba|-1}$ (along with the
 admissible factor $\mfk{q}^{p\ba}$).
 The exponential growth rate of the Catalan numbers ($C_k\leq 4^k$)
 allows us to complete the proof.

\section{Analysis of the propagator}
\label{sec4}

In this section we carry out the analysis of the propagator
\eqref{zero-p}--\eqref{general-p} and complete the proof of
 Theorem \ref{th:mainRS}. Our main goal is to establish
 \eqref{Coef-Estim-Main}.

 Equation \eqref{zero-p} is covered by Theorem \ref{det_burgers}:
 under the assumptions on $\varphi_{\bld{(0)}}=\bE u_0$ and $f$,
 we know that $\mfk{u}$ and $\mfk{u}_x$ are  bounded and continuous, and
 $$
 \mfk{u}\in L_2((0,T);H^2(G)).
 $$
 Next, consider \eqref{km-order}.
 If  $\ba=\boldsymbol{\epsilon}_k$, we have
 \begin{eqnarray*}
  \dot{u}_{\boldsymbol{\epsilon}_k}&
  =D\big(Du_{\boldsymbol{\epsilon}_k}-u_{\zm}u_{\boldsymbol{\epsilon}_k}\big)+
  F_{\boldsymbol{\epsilon}_k}(t,x),u_{\boldsymbol{\epsilon}_k}(0,x)
  =\varphi_{\boldsymbol{\epsilon}_k},
 \end{eqnarray*}
 where
 \begin{eqnarray*}
  F_{\boldsymbol{\epsilon}_k}(t,x)=a_kD^2u_{\zm}+b_kDu_{\zm}+
  c_ku_{\zm}+g_k
 \end{eqnarray*}
 By Theorem \ref{det_burgers}
 $F_{\boldsymbol{\epsilon}_k}\in L_2\left((0,T)\times G \right).$
 We then solve for $u_{\boldsymbol{\epsilon}_k}$ in the normal
  triple $(H^2(G),H^1(G), L_2(G))$ using  Theorem \ref{exis-para} and
 deduce \eqref{Coef-Estim-Main} for $|\ba|=1$.

 As was mentioned earlier, induction on $|\ba|$, Sobolev embedding, and
 Theorem \ref{exis-para} imply that $F_{\ba}$, the free term in
 \eqref{general-p} (see also \eqref{propag-RHS}), is an element of
 $L_2((0,T)\times G)$. All we need is a bound on the
 norm of $F_{\ba}$ that is consistent with \eqref{Coef-Estim-Main}.

 By the Sobolev embedding, the $L_2$ norm of $F_{\ba}$ with $|\ba|>1$
 is controlled by the norms of $u_{\bbt}$, $\bld{(0)}<\bbt<\ba$, in the
 space $L_2((0,T);H^2(G))\bigcap\cC((0,T);H^1(G))$.
 Accordingly, for $\ba$ with $|\ba|\geq 1$, we define
\begin{eqnarray*}
L_{\ba}^2=\int_0^T\|u_{\ba}(t,\cdot)\|^2_{2}dt
+\sup_{0<t<T}\|u_{\ba}(t,\cdot)\|_1^2=
\|u_{\ba}\|_{L_2((0,T);H^2(G))}^2+
\|u_{\ba}\|_{\cC((0,T);H^1(G))}^2.
\end{eqnarray*}
The next step is to derive  a recursion for $L_{\ba}$.

By Theorem \ref{exis-para} applied to \eqref{general-p},
 \begin{eqnarray}
 \label{lemmaineq}
  L^2_{\ba}\leq\lambda\left(\|\varphi_{\ba}\|^2_1
  +\|F_{\ba}\|^2_{L_2((0,T)\times G)}\right),
 \end{eqnarray}
 and $\lambda$ (here and below) is a number depending only on $T$ and
 some norms of $\mfk{u}$. The value of $\lambda$ can change from line to
 line. With no loss of generality, we assume that $\lambda> 1$, and, with
  $m=\max(\ell,r)$, put
 $$
 L_{\km}=\lambda q_k^m.
 $$

For $|\ba|>1$,  (\ref{lemmaineq}), \eqref{propag-RHS},
and assumptions of Theorem \ref{th:mainRS} imply
 \begin{eqnarray*}
  L_{\ba}&\leq&\lambda\Big(\mfk{q}^{\ell\ba}
  +\sum_{\zm<\bbt<\ba}\
  \|u_{\bbt}\|_{\mathcal{C}([0,T]\times G)}\
 \|u_{\ba-\bbt}\|_{L_2((0,T);H^1(G))}\\
 &&+
  \sum_{k:\alpha_k>0}
  q_k^r\|u_{\ba-\km}\|_{L_2((0,T);H^2(G))}\Big) \\
  &\leq&
  \lambda\Bigg( \mfk{q}^{\ell\ba}+\sum_{\zm<\bbt<\ba}
  L_{\bbt}L_{\ba-\bbt}+ \sum_{k:\alpha_k>0}
  q_k^r L_{\ba-\km}\Bigg).
 \end{eqnarray*}
Define
\begin{eqnarray*}
\tilde{L}_{\ba}=2\lambda\left(\frac{L_{\ba}}{\mfk{q}^{m\ba}}
+1\right).
\end{eqnarray*}
Then
\begin{eqnarray*}
  \tilde{L}_{\ba}\leq  \sum_{\zm<\bbt<\ba} \tilde{L}_{\bbt}\tilde{L}_{\ba-\bbt}.
\end{eqnarray*}
 Let $\left\{A_{\boldsymbol{\alpha}}:\boldsymbol{\alpha}\in\mathcal{J}\right\}$ be a sequence such that
 $A_{\boldsymbol{\epsilon_k}}=\tilde{L}_{\km}$ and
 \begin{equation}
  \label{equalrec}
  A_{\ba}=\sum_{\zm<\bbt<\ba}A_{\bbt}A_{\ba-\bbt},\ |\ba|>1.
 \end{equation}
  It follows by induction that
  \begin{equation}
  \label{eq:LleqA}
  \tilde{L}_{\ba}\leq A_{\ba}
   \end{equation}
   for all $\ba\in\cJ$. Indeed, \eqref{eq:LleqA} is true by
    definition if $|\ba|=1$. Assume \eqref{eq:LleqA} holds for
 all $|\boldsymbol{\alpha}|<N$. For $|\ba|=N$,
 \begin{eqnarray*}
  \tilde{L}_{\ba}&\leq &\sum_{\zm<\bbt<\ba} \tilde{L}_{\bbt}\tilde{L}_{\ba-\bbt}\\
  &\leq& \sum_{\zm<\bbt<\ba}A_{\bbt}A_{\ba-\bbt}=A_{\ba},
 \end{eqnarray*}
 proving \eqref{eq:LleqA} for all $\ba\in\mathcal{J}$.
 \\

 Our next step is to find a bound on $A_{\ba}$, which we do using
 generating functions. Since multi-indices $\ba$ can have non-zero
 entries in arbitrary positions, the argument requires an additional construction.

 Let $d\geq 1$ be an integer  and let $\boldsymbol{\beta}=(\beta_1,\beta_2,\ldots,\beta_d)$ be
  a $d-$dimensional vectors of nonnegative integers with at least one $\beta_i>0$.
  Let
 $M_{\epsilon_i}\in\bR$, $i=1,2,\ldots,d$, be prescribed positive numbers  and let $M_{\boldsymbol{\beta}}$, $|\boldsymbol{\beta}|>1$, be defined through the recursive equation,
 \begin{eqnarray*}
  M^{(d)}_{\boldsymbol{\beta}}
  =\sum_{\boldsymbol{\gamma}
  +\boldsymbol{\delta}=\boldsymbol{\beta},|\boldsymbol{\gamma}|,
  |\boldsymbol{\delta}|>0}{M^{(d)}_{\boldsymbol{\gamma}}
  M^{(d)}_{\boldsymbol{\delta}}}.
 \end{eqnarray*} Consider,
 \begin{eqnarray*}
  F_d(z_1,z_2,....,z_d)
  =\sum_{\boldsymbol{\beta}}{M^{(d)}_{\boldsymbol{\beta}}
  z_1^{\beta_1}z_2^{\beta_2}\ldots z_d^{\beta_d}}.
 \end{eqnarray*}
 Then
 \begin{eqnarray*}
  F^2_d(z_1,z_2,....,z_d)&=&\sum_{|\boldsymbol{\beta}|>1}
  {\left(\sum_{\boldsymbol{\gamma}
+\boldsymbol{\delta}=\boldsymbol{\beta},|\boldsymbol{\gamma}|,
|\boldsymbol{\delta}|>0}{M^{(d)}_{\boldsymbol{\gamma}}
M^{(d)}_{\boldsymbol{\delta}}}\right)z_1^{\beta_1} z_2^{\beta_2}\ldots z_d^{\beta_d}}\\
 &=&\sum_{|\boldsymbol{\beta}|>1}
 {M^{(d)}_{\boldsymbol{\beta}}z_1^{\beta_1}z_2^{\beta_2}\ldots z_d^{\beta_d}}\\
 &=&F_d(z_1,z_2,....,z_d)-\sum_i{M^{(d)}_{\boldsymbol{\epsilon_i}}z_i},
 \end{eqnarray*}
 where $\boldsymbol{\epsilon_i}$ is the $d$-dimensional vector with $1$ at the $i^{th}$ coordinate and $0$ everywhere else. Solving for $F_d$ we get,
 \begin{eqnarray*}
  F_d(z_1,z_2,....,z_d)
  =\frac{1\pm\sqrt{1-4\sum_i{M^{(d)}_{\boldsymbol{\epsilon_i}}z_i}}}{2}.
 \end{eqnarray*}
 By comparing the coefficients of similar powers we obtain,
 \begin{equation}
  \label{finiterec}
  M^{(d)}_{\boldsymbol{\beta}}
  =\frac{1}{|\boldsymbol{\beta|}}
  \binom{2|\boldsymbol{\beta}|-2} {|\boldsymbol{\beta}|-1}
  \binom{|\boldsymbol{\beta}|}{\boldsymbol{\beta}}
\prod_i {(M^{(d)}_{\boldsymbol{\epsilon_i}})^{\beta_i}};
 \end{equation}
 here $\binom{|\bbt|}{\bbt}=|\bbt|!/\bbt!$.
 Let $\Gamma_d=\left\{\boldsymbol{\alpha}
 =(\alpha_1,\alpha_2,\ldots)\in\mathcal{J}:\alpha_i=0\quad \mbox{for all}\quad i>d\right\}$. If
 $\boldsymbol{\alpha}\in\Gamma_d$ then, by 
   (\ref{equalrec}), $A_{\boldsymbol{\alpha}}$ is uniquely determined by
 $\left\{A_{\boldsymbol{\beta}}:
 \boldsymbol{\beta}<\boldsymbol{\alpha}\right\}$. Also,  if $\boldsymbol{\alpha}\in\Gamma_d$ and
  $\boldsymbol{\beta}<\boldsymbol{\alpha}$, then
 $\bbt \in  \Gamma_d$ as well.
 Set $M^{(d)}_{\boldsymbol{\epsilon_i}}=A_{\boldsymbol{\epsilon_i}}$, for each $\epsilon_i\in\Gamma_d$. Then
 \begin{equation}
 \label{Cat1}
 M^{(d)}_{\boldsymbol{\alpha^d}}=A_{\boldsymbol{\alpha}},
  \end{equation}
  where $\boldsymbol{\alpha^d}=(\alpha_1,\alpha_2,\ldots,\alpha_d)$,
 which follows by induction on $|\boldsymbol{\alpha}|$. Indeed, equality
  \eqref{Cat1} is true if  $\boldsymbol{\alpha}\in\Gamma_d$ such that
 $|\boldsymbol{\alpha}|=1$. Assume \eqref{Cat1} is true for $\boldsymbol{\alpha}\in\Gamma_d$ such that $|\boldsymbol{\alpha}|<N$. For $\boldsymbol{\alpha}\in\Gamma_d$ such that
 $|\boldsymbol{\alpha}|=N$,
 \begin{eqnarray*}
  A_{\boldsymbol{\alpha}}&=&\sum_{\boldsymbol{\gamma}
  +\boldsymbol{\delta}=\boldsymbol{\alpha},
  |\boldsymbol{\gamma}|,|\boldsymbol{\delta}|>0}
  {A_{\boldsymbol{\gamma}}A_{\boldsymbol{\delta}}}\\
  &=&\sum_{\boldsymbol{\gamma^d}
  +\boldsymbol{\delta^d}=\boldsymbol{\alpha^d},
  |\boldsymbol{\gamma^d}|,|\boldsymbol{\delta^d}|>0}
  {M^{(d)}_{\boldsymbol{\gamma^d}}M^{(d)}_{\boldsymbol{\delta^d}}}\\
  &=&M^{(d)}_{\boldsymbol{\alpha^d}},
 \end{eqnarray*}
and the  general equality \eqref{Cat1} follows.

 Given any $\boldsymbol{\alpha}\in\mathcal{J}$ there exists a $d\in\mathbb{N}$ such that $\boldsymbol{\alpha}\in\Gamma_d$. For such a $d$,
 $M^{(d)}_{\boldsymbol{\alpha^d}}=A_{\boldsymbol{\alpha}}$. Using (\ref{finiterec}),
 \begin{equation}
  A_{\boldsymbol{\alpha}}=\frac{1}{|\boldsymbol{\alpha}|}
  {2|\boldsymbol{\alpha}|-2 \choose |\boldsymbol{\alpha}|-1}{|\boldsymbol{\alpha}| \choose
  \boldsymbol{\alpha}}\prod_i {A_{\boldsymbol{\epsilon_i}}^{\alpha_i}}.
 \end{equation}
 We now notice that
 $$
 C_n=\frac{1}{n+1}\binom{2n}{n},\  n\geq 0,
 $$
 is $n$th Catalan number, a very popular combinatorial object
 (see, for example, Stanley \cite{Stanley}).
 Hence, re-tracing our arguments back to $L_{\ba}$, we conclude that
 \begin{eqnarray}
 \label{finalbound}
  L_{\ba}\leq (2\lambda)^{2|\ba|}C_{|\ba|-1}
  \binom{|\boldsymbol{\alpha}|}
  {\boldsymbol{\alpha}}\mfk{q}^{m\ba}.
 \end{eqnarray}
The final two observations are
\begin{itemize}
\item $C_n\leq 4^n$, which follows by the Stirling formula;
\item $\binom{|\boldsymbol{\alpha}|}
  {\boldsymbol{\alpha}}\leq \mfk{q}^{\ba}$, which we proved in
  Proposition \ref{prop:aux}.
  \end{itemize}
  Therefore, $L_{\ba}\leq (8\lambda)^{|\ba|}\mfk{q}^{(m+1)\ba}$,
  which establishes  \eqref{Coef-Estim-Main}
  and completes the proof of Theorem \ref{th:mainRS}.

\vskip 0.2in

\def\cprime{$'$}


\vskip 0.2in

\end{document}